\providecommand{\tabularnewline}{\\}
\numberwithin{equation}{section}
\numberwithin{figure}{section}
\theoremstyle{plain}
\newtheorem{thm}{\protect\theoremname}[section]
\theoremstyle{plain}
\newtheorem{assumption}[thm]{\protect\assumptionname}
\theoremstyle{plain}
\newtheorem{lem}[thm]{\protect\lemmaname}
\theoremstyle{definition}
\newtheorem*{example*}{\protect\examplename}
\theoremstyle{definition}
\newtheorem{defn}[thm]{\protect\definitionname}
\theoremstyle{plain}
\newtheorem{prop}[thm]{\protect\propositionname}
\theoremstyle{plain}
\newtheorem{cor}[thm]{\protect\corollaryname}
\theoremstyle{plain}
\newtheorem*{prop*}{\protect\propositionname}
\theoremstyle{remark}
\newtheorem{rem}[thm]{\protect\remarkname}
\date{}
\providecommand{\assumptionname}{Assumption}
\providecommand{\corollaryname}{Corollary}
\providecommand{\definitionname}{Definition}
\providecommand{\examplename}{Example}
\providecommand{\lemmaname}{Lemma}
\providecommand{\propositionname}{Proposition}
\providecommand{\remarkname}{Remark}
\providecommand{\theoremname}{Theorem}
\begin{document}
\title{On Radon hypergeometric functions on the Grassmannian manifold }
\author{Hironobu Kimura,\\
 Department of Mathematics, Graduate School of Science and\\
 Technology, Kumamoto University}

\maketitle

\global\long\def\R{\mathbb{R}}%
 
\global\long\def\al{\alpha}%
\global\long\def\be{\beta}%
 
\global\long\def\ga{\gamma}%
 
\global\long\def\de{\delta}%
 
\global\long\def\expo{\mathrm{exp}}%
 
\global\long\def\f{\varphi}%
 
\global\long\def\W{\Omega}%
 
\global\long\def\wm{\omega}%
 
\global\long\def\lm{\lambda}%
\global\long\def\te{\theta}%
 
\global\long\def\C{\mathbb{C}}%
 
\global\long\def\Z{\mathbb{Z}}%
 
\global\long\def\Ps{\mathbb{P}}%
 
\global\long\def\De{\Delta}%
 
\global\long\def\cbatu{\mathbb{C}^{\times}}%
 
\global\long\def\La{\Lambda}%
 
\global\long\def\vt{\vartheta}%
 
\global\long\def\G{\Gamma}%
\global\long\def\GL#1{\mathrm{GL}(#1)}%
 
\global\long\def\Span{\mathrm{span}}%

\global\long\def\gras{\mathrm{Gr}}%
 
\global\long\def\fl{\mathrm{Flag}}%
 
\global\long\def\ep{\varepsilon}%
  
\global\long\def\diag{\mathrm{diag}}%
 
\global\long\def\tr{\,\mathrm{^{t}}}%
 
\global\long\def\re{\mathrm{Re}}%
 
\global\long\def\im{\mathrm{Im}}%
 
\global\long\def\norm{\mathscr{N}(n)}%
 
\global\long\def\sm{\sigma}%
 
\global\long\def\ini{\mathrm{in}_{\prec}}%
 
\global\long\def\cL{\mathcal{L}}%
 
\global\long\def\rank{\mathrm{rank}}%
 
\global\long\def\tH{\tilde{H}}%
 
\global\long\def\mat{\mathrm{Mat}}%
 
\global\long\def\lto{\longrightarrow}%
 
\global\long\def\Si{\mathfrak{S}}%
 
\global\long\def\cO{\mathcal{O}}%
 
\global\long\def\gl{\mathfrak{gl}}%
\global\long\def\gee{\mathfrak{g}}%
 
\global\long\def\Tr{\,\mathrm{Tr}}%
 
\global\long\def\ad{\mathrm{ad}}%
 
\global\long\def\Ad{\mathrm{Ad}}%
 
\global\long\def\ha{\mathfrak{h}}%
 
\global\long\def\fj{\mathfrak{j}}%
 
\global\long\def\mrn{\mat'(r,N)}%
 
\global\long\def\sgn{\mathrm{sgn}}%
 
\global\long\def\hlam{H_{\lambda}}%
 
\global\long\def\cR{\mathcal{R}}%
 
\global\long\def\mnm{\mat'(m,N)}%
\global\long\def\cP{\mathcal{P}}%
 
\global\long\def\herm{\mathscr{H}(r)}%
 
\global\long\def\ghyp{\,_{2}F_{1}}%
 
\global\long\def\kum{\,_{1}F_{1}}%
 
\global\long\def\auto{\mathrm{Aut}}%
 
\global\long\def\la{\langle}%
 
\global\long\def\ra{\rangle}%
 
\global\long\def\Ai{\mathrm{Ai}}%
 
\global\long\def\adj{\mathrm{Ad}}%
 
\global\long\def\yn{\mathbf{Y}_{n}}%
 
\global\long\def\eq{\mathcal{I}}%
 
\global\long\def\hyp#1#2{\, _{#1}F_{#2}}%
 
\global\long\def\jro{J_{r}^{\circ}}%
 
\global\long\def\jroo{\mathfrak{j}_{r}^{\circ}}%
\global\long\def\jor#1{J^{\circ}(#1)}%
 
\global\long\def\fa{\mathfrak{a}}%
 
\global\long\def\pa{\partial}%
 
\global\long\def\bx{\mathbf{x}}%
 
\global\long\def\etr{\mathrm{etr}}%

\begin{abstract}
We give a definition of Radon hypergeometric function (Radon HGF)
of confluent and nonconfluent type, which is a function on the Grassmannian
$\gras(m,nr)$ obtained as a Radon transform of a character of the
universal covering group of $H_{\lm}\subset\GL{nr}$ specified by
a partition $\lm$ of $n$, where $H_{(1,\dots,1)}\simeq(\GL r)^{n}$.
When $r=1$, the Radon HGF reduces to the Gelfand HGF on the Grassmannian.
We give a system of differential equations satisfied by the Radon
HGF and show that the Hermitian matrix integral analogues of Gauss
HGF and its confluent family: Kummer, Bessel, Hermite-Weber and Airy
function, are obtained in a unified manner as the Radon HGF on $\gras(2r,4r)$
corresponding to the partitions $(1,1,1,1)$, $(2,1,1)$, $(2,2)$,
$(3,1)$ and $(4)$, respectively.
\end{abstract}

\section{Introduction}

This paper concerns a complex analytic study of the hypergeometric
function (HGF) on the Grassmannian manifold $\gras(m,N)$, the set
of $m$-dimensional subspaces of $\C^{N}$, defined by a Radon transform,
which we call the Radon hypergeometric function in this paper. 

Let us explain our motivation and reason to consider such functions.
In the theory of special functions, the Gauss HGF is important and
maybe the best known one, which can be defined by the integral 
\begin{equation}
\ghyp(a,b,c;x)=\frac{\G(c)}{\G(a)\G(c-a)}\int_{0}^{1}u^{a-1}(1-u)^{c-a-1}(1-ux)^{-b}du.\label{eq:intro-1}
\end{equation}
Note that the integrand is a product of complex powers of polynomials
in $u$ of degree $1$. In \cite{Aomoto}, Aomoto started the study
of functions defined by an integral 
\begin{equation}
F(x)=\int_{C}\prod_{j=1}^{N}f_{j}(u,x)^{\al_{j}}du_{1}\wedge\cdots\wedge du_{m-1},\quad f_{j}=x_{0,j}+u_{1}x_{1,j}+\cdots+u_{m-1}x_{m-1,j},\label{eq:intro-2}
\end{equation}
where $f_{1}$ is sometimes normalize as $f_{1}=1$. To obtain the
Gauss HGF, we consider (\ref{eq:intro-2}) for $(m,N)=(2,4)$ and
$f_{1}=1,f_{2}=u,f_{3}=1-u,f_{4}=1-ux$. Interpreting the work of
Aomoto in terms of Radon transform, Gelfand \cite{Gelfand} introduced
a class of HGF on the Grassmannian manifold $\gras(m,N)$, called
nowadays Gelfand's HGF on the Grassmannian. For a Cartan subgoup $H\subset\GL N$
consisting of diagonal matrices, let $\chi:\tilde{H}\to\cbatu$ be
a character of the universal covering group $\tilde{H}$ of $H$ which
is given by $\chi(h;\al)=\prod_{j=1}^{N}h_{j}^{\al_{j}}$ for $h=\diag(h_{1},\dots,h_{N})$.
Then the Radon transform of $\chi(h;\al)$ is considered; it is, roughly
speaking, to substitute linear forms $h_{j}(t)=\sum_{0\leq i<m}t_{i}x_{i,j}$
into $\chi$, where $h_{j}(t)$ is regarded as a form on the space
of homogeneous coordinates of $\Ps^{m-1}$, and then to integrate
it on some $(m-1)$-chain $C$ of $\Ps^{m-1}$. If we use the affine
coordinates $u=(u_{1},\dots,u_{m-1}),\;u_{i}=t_{i}/t_{0}$ in the
affine open set $\{[t]\in\Ps^{m-1}\mid t_{0}\neq0\}$, we obtain (\ref{eq:intro-2}).
In this context, the Gauss HGF can be understood as Gelfand's HGF
on $\gras(2,4)$. A confluent family of Gauss HGF is also important
in various domains of mathematics. The members of confluent family
which we consider are Kummer's confluent HGF, Bessel function, Hermite-Weber
function and Airy function \cite{IKSY}. For example, the Airy function
is given by the integral 
\[
\Ai(x)=\int_{C}\exp\left(ux-\frac{1}{3}u^{3}\right)du,
\]
and is considered as the most simple complex oscillatory integral
whose phase function is a versal deformation of $A_{2}$ type simple
singularity \cite{Arnold}. We have a framework of understanding these
functions as functions on $\gras(2,4)$ in the spirit of Gelfand \cite{Kimura-Haraoka,Kimura-H-T,Kimura-Koitabashi}.
In fact, to define the Gelfand HGF of confluent type we use a maximal
abelian subgroup $H_{\lm}\subset\GL N$ obtained as a centralizer
of a regular element of $\GL N$ specified by a partition $\lm$ of
$N$. As will be seen in Section \ref{subsec:herm-radon-2}, the confluent
family of Gauss: Kummer, Bessel, Hermite-Weber and Airy are essentially
the same as the Gelfand HGF on $\gras(2,4)$ corresponding to the
partitions $(2,1,1)$, $(2,2)$, $(3,1)$ and $(4)$, respectively.

On the other hand, there is another extension of Gauss HGF defined
by a Hermitian matrix integral
\[
\frac{\G_{r}(c)}{\G_{r}(a)\G_{r}(c-a)}\int_{0<U<1_{r}}(\det U)^{a-r}(\det(1_{r}-U))^{c-a-r}(\det(1_{r}-UX))^{-b}\,dU,
\]
where $\G_{r}(a)$ is a Hermitian matrix integral analogue of the
gamma function and the integration is done in the space $\herm$ of
$r\times r$ Hermitian matrices with the standard Euclidean volume
form $dU$. The integral gives a function of $X\in\herm$. A Hermitian
matrix integral analogue of the confluent family of Gauss also exists
and studied in various context \cite{Faraut,inamasu-ki,kimura-1,Kontsevich,Mehta,muirhead},
in multivariate statistics and Random matrix theory, for example.
Especially, it should be noticed that the Hermitian matrix integral
analogue of Airy
\[
\int_{C}\exp\left(\Tr\left(UX-\frac{1}{3}U^{3}\right)\right)dU,
\]
is used by Kontsevich \cite{Kontsevich} to solve Witten's conjecture
on the $2$-dimensional quantum gravity. For the explicit form of
the Hermitian matrix integral analogues, see Section \ref{subsec:class-HGF-matrix-2}
and \cite{inamasu-ki,kimura-1}.

Taking the above situation into account, it is natural to ask if these
Hermitian matrix integrals can also be understood in a similar way
as the classical HGF is understood as Gelfand's HGF. The first attempt
was done by Oshima \cite{Ohsima}, who considered an extension of
Gelfand HGF mainly for the non-confluent type from the viewpoint of
representation theory. Tanisaki \cite{Tanisaki} gave an extension
of Gelfand's idea to define a HGF on Hermitian symmetric spaces by
Radon transform on the level of $D$-modules. The HGF, discussed in
the works of Oshima and Tanisaki, will be called the Radon HGF.

In this paper, we discuss the Radon HGF on the Grassmannian from the
point of view of complex analysis and make apparent its relation to
the Hermitian matrix integrals mentioned above.

The paper is organized as follows. In Section \ref{sec:Definition-of-Radon},
we give the definition of Radon HGF of non-confluent type and confluent
type on the Grassmannian $\gras(m,nr)$ as a Radon transform of a
character of the subgroup $H_{\lm}\subset\GL{nr}$ specified by a
partition $\lm$ of $n$. For example, the group $H_{(1,\dots,1)}$
is isomorphic to $(\GL r)^{n}$. In Section \ref{sec:Radon-hypergeometric-system},
the system of differential equations for the Radon HGF is given. The
main body of the system is differential equations of order $r+1$
which characterizes the image of Radon transform. It is an easy part
to show that an image of Radon transform satisfies such $(r+1)$-th
order differential equations. For the completeness of presentation,
we give its proof. In Section \ref{sec:Radon-HGF-as}, we give an
explicit form of the Hermitian matrix integral analogue of Gauss,
Kummer, Bessel, Hermite-Weber and Airy, and show that they are obtained
as a Radon HGF on $\gras(2r,4r)$ corresponding to the partitions
$(1,1,1,1)$, $(2,1,1)$, $(2,2)$, $(3,1)$ and $(4)$, respectively. 

\section{\label{sec:Definition-of-Radon}Definition of Radon HGF}

\subsection{Rough sketch of Radon transform}

Let $r,m$ and $N$ be positive integers such that $r<m<N$, $V=\C^{N}$
and $\fl(r,m,V)$ be a generalized flag manifold:
\[
\fl(r,m,V):=\left\{ (v_{1},v_{2})\mid v_{1}\subset v_{2}\subset V:\text{subspaces, \ensuremath{\dim_{\C}v_{1}=r,\dim_{\C}v_{2}=m}}\right\} .
\]
It is a smooth complex algebraic variety of dimension $r(m-r)+m(N-m)$.
Consider a double fibration 
\[
\begin{array}{ccccc}
 &  & \fl(r,m,V)\\
 & \pi_{1}\swarrow &  & \searrow\pi_{2}\\
\\
 & M_{1}:=\gras(r,V) &  & M_{2}=\gras(m,V)
\end{array}
\]
where $\gras(r,V)$ is the Grassmannian manifold of $r$-dimensional
complex subspaces in $V$ and $\pi_{1},\pi_{2}$ are projections defined
by 
\begin{align*}
\pi_{1} & :\fl(r,m,V)\ni(v_{1},v_{2})\mapsto v_{1}\in\gras(r,V),\\
\pi_{2} & :\fl(r,m,V)\ni(v_{1},v_{2})\mapsto v_{2}\in\gras(m,V).
\end{align*}
Sometimes $\gras(r,\C^{m})$ is written as $\gras(r,m)$. We consider
the Radon transform of a given ``function'' $f$ on $M_{1}$. Roughly
speaking, it is a process of obtaining a ``function'' $\cR f$ on
$M_{2}$. Here ``function'' in our setting is actually a section
of a certain line bundle on $M_{1}$ or $M_{2}$. Let $\pi_{1}^{*}f$
be the pullback of $f$ by $\pi_{1}$; take any $v\in M_{2}$ and
consider the restriction of $\pi_{1}^{*}f$ to the fiber $\pi_{2}^{-1}(v)$;
one obtains a form $\pi_{1}^{*}f|_{\pi_{2}^{-1}(v)}\cdot\tau$ on
$\pi_{2}^{-1}(v)$ with an appropriately chosen form $\tau$ of degree
$\dim\pi_{2}^{-1}(v)$; integrate it on some chain $C(v)$ in $\pi_{2}^{-1}(v)$
of dimension $\dim\pi_{2}^{-1}(v)$; one obtains $\cR f:M_{2}\ni v\mapsto\int_{C(v)}\pi_{1}^{*}f|_{\pi_{2}^{-1}(v)}\cdot\tau$.
Note that $\pi_{2}^{-1}(v)=\{(v_{1},v)\in\fl(r,m,V)\}\simeq\gras(r,v)\simeq\gras(r,m)$
and hence, $\dim\pi_{2}^{-1}(v)=r(m-r)$ and $\tau$ is an $r(m-r)$-form. 

Let us describe the above process more explicitly using the coordinates
of Grassmannian. Let $\mnm$ denote the set of $m\times N$ complex
matrices of maximum rank. We identify $\gras(m,V)$ with the quotient
space $\GL m\backslash\mnm$ of $\mat'(m,N)$ by the action of $\GL m$,
where $\GL m$ acts on $\mnm$ from the left by matrix multiplication.
Then the identification is given by 
\[
\GL m\backslash\mnm\ni[z]\mapsto v=\Span\{z_{1}',\dots,z_{m}'\}\in\gras(m,V),
\]
where $z_{i}'$ is the $i$-th row vector of $z\in\mat'(m,N)$ and
$z_{1}',\dots,z_{m}'$ spans an $m$-dimesional subspace of $V$.
It is easy to see that $v$ is independent of the choice of representative
$z$ of a class $[z]$. In this case, $\mat'(m,N)$ is called the
space of homogeneous coordinates of $\gras(m,V)$ in this paper.

Similarly, the identification of $\pi_{2}^{-1}(v)=\gras(r,v)$ with
$\GL r\backslash\mat'(r,m)$ is given by 

\[
\GL r\backslash\mat'(r,m)\ni[t]\mapsto\Span_{\C}\{t_{1}'z,\dots,t_{r}'z\}\in\gras(r,v),
\]
where $t_{i}'\in\C^{m}$ is the $i$-th row vector of $t$ and $t_{i}'z=\sum_{k=1}^{m}t_{i,k}z_{k}'$
is a vector belonging to the subspace $v\subset V$. So, if $f$ is
given as a function on the space $\mat'(r,N)$ of homogeneous coordinates
of $M_{1}$, we see that $\pi_{1}^{*}f|_{\pi_{2}^{-1}(v)}$ is expressed
as $f(tz)$. Thus a Radon transform of $f$ is written as 
\[
\GL m\backslash\mnm\ni[z]\mapsto(\cR f)(z):=\int f(tz)\cdot\tau(t).
\]
About the form $\tau(t)$, we explain in the definition of Radon HGF
in the next section.

Next step is to prepare a function $f$ to be integrated to define
the Radon HGF. We use a character of some Lie subgroup of $\GL N$.
For this end we assume the following.
\begin{assumption}
$N=rn$ for some positive integer $n$.
\end{assumption}

\subsection{\label{subsec:Radon-HGF-nonconf}Radon HGF of non-confluent type}

We consider the subgroup of $G=\GL N$: 
\[
H:=\left\{ h=\left(\begin{array}{ccc}
h_{1}\\
 & \ddots\\
 &  & h_{n}
\end{array}\right)\mid h_{i}\in\GL r\right\} \simeq\GL r{}^{n}.
\]
When $r=1$, $H$ reduces to a Cartan subgroup of $G$. The following
lemma gives a character of the universal covering group of $H$.
\begin{lem}
\label{lem:char-nonconf}Let $\tilde{H}$ be the universal covering
group of $H$. Then any character $\chi:\tilde{H}\to\cbatu$ is given
by 
\begin{equation}
\chi(h;\al)=\prod_{i=1}^{n}(\det h_{i})^{\al_{i}},\qquad h=\diag(h_{1},\dots,h_{n})\in\tilde{H}\label{eq:char-0}
\end{equation}
for some $\al=(\al_{1},\dots,\al_{n})\in\C^{n}$. $\al$ is said to
be a weight of $\chi$.
\end{lem}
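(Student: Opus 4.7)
The plan is to reduce the classification of characters of $\tilde H$ to a linear-algebraic classification of one-dimensional Lie algebra representations. Since $\tilde H\simeq(\widetilde{\GL r})^{n}$ is connected and simply connected, any holomorphic character $\chi:\tilde H\to\cbatu$ is determined by its differential $d\chi:\ha\to\C$, where $\ha=\gl(r)^{n}$ is the Lie algebra of $\tilde H$; conversely, every Lie algebra homomorphism $\ha\to\C$ integrates uniquely to such a character.

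The key step is to pin down the possible $d\chi$. Because the target $\C$ is abelian, $d\chi$ vanishes on the commutator subalgebra $[\ha,\ha]=\mathfrak{sl}(r)^{n}$, and hence factors through the abelianization $\ha/[\ha,\ha]$. Under the trace, $\gl(r)/\mathfrak{sl}(r)\simeq\C$, so this abelianization is identified with $\C^{n}$ via $(X_{1},\dots,X_{n})\mapsto(\Tr X_{1},\dots,\Tr X_{n})$. Consequently there exist unique $\al_{1},\dots,\al_{n}\in\C$ with
\[
d\chi(X_{1},\dots,X_{n})=\sum_{i=1}^{n}\al_{i}\,\Tr X_{i}.
\]

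It remains to integrate. On $\widetilde{\GL r}$ one has a globally defined holomorphic lift $\widetilde{\log\det}$ of the (multivalued) function $\log\circ\det$ on $\GL r$; setting $(\det h_{i})^{\al_{i}}:=\expo(\al_{i}\cdot\widetilde{\log\det}(h_{i}))$ and multiplying over $i=1,\dots,n$ yields a character $\tilde H\to\cbatu$ whose differential matches the one above. The Lie group–Lie algebra correspondence then forces $\chi$ to equal (\ref{eq:char-0}). The only subtle point is that for non-integer $\al_{i}$, the expression $(\det h_{i})^{\al_{i}}$ is genuinely multivalued on $\GL r$ itself, which is exactly why passage to the universal cover is necessary; beyond carefully keeping track of this distinction, I do not expect any serious obstacle.
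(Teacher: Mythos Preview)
Your proof is correct and follows the same overall arc as the paper's: reduce to the Lie algebra via the simply-connectedness of $\tilde H$, classify the linear functionals $\gl(r)\to\C$, and integrate back to $(\det\cdot)^{\al}$. The one place you differ is in the classification step. You invoke the structural fact that any Lie algebra homomorphism into the abelian $\C$ kills $[\gl(r),\gl(r)]=\mathfrak{sl}(r)$, so it factors through $\gl(r)/\mathfrak{sl}(r)\simeq\C$ via the trace. The paper instead argues more concretely: it observes that the differential $\phi$ is conjugation-invariant (because the target of $\chi$ is abelian), evaluates $\phi$ on a generic diagonalizable matrix to get $\phi(\diag(x_{1},\dots,x_{r}))=\sum\al_{i}x_{i}$, and then conjugates by permutation matrices to force $\al_{1}=\cdots=\al_{r}$. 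Your route is shorter and requires knowing $[\gl(r),\gl(r)]=\mathfrak{sl}(r)$; the paper's route is more elementary and self-contained but a bit longer. Either way the conclusion and the integration step are identical.
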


\begin{proof}
Since $H\simeq\left(\GL r\right)^{n}$, it is sufficient to show that
a character $f:\widetilde{\GL r}\to\cbatu$ is given by $f(x)=(\det x)^{\al}$
for some $\al\in\C$. Let $\gl(r)$ be the Lie algebra of $\GL r$
and let $\phi$ be the differential of $f$, i.e. the Lie algebra
homomorphism $\phi=df:\gl(r)\to\C$ associated with $f$. So $\phi$
is a linear map satisfying
\begin{equation}
\phi(gXg^{-1})=\phi(X),\quad\forall X\in\gl(r),\forall g\in\GL r.\label{eq:nonconfl-1}
\end{equation}
We assert that $\phi(X)=\al\Tr(X)$ for some $\al\in\C$. Let $X\in\gl(r)$
be a generic element such that its eigenvalues are all distinct. Then
$X=g\cdot\diag(x_{1},\dots,x_{r})\cdot g^{-1}$ with some $g\in\GL r$.
Then 
\begin{equation}
\phi(X)=\phi\left(g\cdot\diag(x_{1},\dots,x_{r})\cdot g^{-1}\right)=\phi(\diag(x_{1},\dots,x_{r}))=\sum_{i=1}^{r}\al_{i}x_{i}\label{eq:char-0-1}
\end{equation}
for some constants $\al_{1},\dots,\al_{r}\in\C$. We show that $\al_{1}=\cdots=\al_{r}$.
Now in (\ref{eq:nonconfl-1}) we take $X=\diag(x_{1},\dots,x_{r})$
and $g$ the permutation matrix corresponding to a permutation $\sigma\in\Si_{r}$.
Then from (\ref{eq:nonconfl-1}) and (\ref{eq:char-0-1}) we have
\[
\sum_{i=1}^{r}\al_{i}x_{i}=\sum_{i=1}^{r}\al_{\sigma^{-1}(i)}x_{i},
\]
which holds for any $\sigma\in\Si_{r}$ and any $x_{1},\dots,x_{r}\in\C$.
It follows that all $\al_{i}$ are equal which we denote as $\al$.
Thus we have $\phi(X)=\sum_{i}\al x_{i}=\al\Tr(X)$. Now chasing the
diagram
\[
\begin{CD}\widetilde{\GL r}@>f>>\cbatu\\
@V\log VV@AA\exp A\\
\gl(r)@>\phi:=df>>\C,
\end{CD}
\]
we have $f(x)=(\exp\circ\phi\circ\log))(x)=\exp(\phi(X))=\exp(\phi(X))=\exp\left(\al\Tr(X)\right)=(\det x)^{\al}$.
\end{proof}
We want to regard the character $\chi(\cdot;\al)$ given by (\ref{eq:char-0})
as a multivalued section of a certain line bundle on $M_{1}=\gras(r,N)\simeq\GL r\backslash\mat'(r,N)$.
For this, consider the map $\iota:H\to\mat'(r,N)$ defined by 
\[
H\ni\diag(h_{1},\dots,h_{n})\mapsto(h_{1},\dots,h_{n})\in\mat'(r,N),
\]
and identify $H$ as a Zariski open subset $\iota(H)\subset\mat'(r,N)$.
Then $\chi(h;\al)$ is regarded as a multivalued analytic function
on this Zariski open subset. Through the map $\iota$, a multiplication
$g\cdot h=\diag(g,\dots,g)\cdot\diag(h_{1},\dots,h_{n})$ in $H$
is translated as an action of $\GL r$ on $\mat'(r,N)$ given by $\GL r\times\mat'(r,N)\ni(g,h)\mapsto gh=(gh_{1},\dots,gh_{n})\in\mat'(r,N)$.
Then we have 
\begin{equation}
\chi(gh;\al)=\prod_{i=1}^{n}(\det(gh_{i}))^{\al_{i}}=(\det g)^{\al_{1}+\cdots+\al_{n}}\chi(h;\al).\label{eq:char-1}
\end{equation}

\begin{assumption}
\label{assu:radon-nonconf}The weight $\al$ of $\chi(\cdot;\al)$
satisfies 

\[
\al_{1},\dots,\al_{n}\notin\Z\quad\text{and }\quad\al_{1}+\cdots+\al_{n}=-m.
\]
\end{assumption}

Let $\rho_{m}:\GL r\to\cbatu$ be a character defined by $\rho_{m}(g)=(\det g)^{m}$,
then (\ref{eq:char-1}) can be written as
\begin{equation}
\chi(gh;\al)=\rho_{m}(g)^{-1}\chi(h;\al)\label{eq:char-2}
\end{equation}
and it implies that $\chi(\cdot;\al)$ can be regarded as a multivalued
analytic section of the line bundle on $\gras(r,V)\simeq\GL r\backslash\mat'(r,N)$
determined by the character $\rho_{m}$. This line bundle is denoted
as $L_{\rho_{m}}$. Let us explain the meaning of the assumption $\al_{1}+\cdots+\al_{n}=-m$.
Put $T=\gras(r,m)=\GL r\backslash\mat'(r,m)$ and let $K_{T}$ be
the canonical bundle of $T$ whose holomorphic sections are holomorphic
forms of the highest degree ($=\dim_{\C}T=r(m-r))$. The sheaf of
holomorphic sections of $K_{T}$ is denoted as $\cO(K_{T})$. This
sheaf is described as follows. Let $p:T\to\Ps(\wedge^{r}\C^{m})$
be the Pl\"ucker embedding given by 
\[
T\ni[t]\mapsto[(p_{J})_{J}]\in\Ps(\wedge^{r}\C^{m}),
\]
where $J$ runs on the set $\{J=(j_{1},\dots,j_{r})\mid1\leq j_{1}<\cdots<j_{r}\leq m\}$
and $p_{J}:=\det(t_{j_{1}},\dots,t_{j_{r}})$ is the $J$-th Pl\"ucker
coordinate for $t=(t_{1},\dots,t_{m})$. Then it is known that $\cO(K_{T})\simeq p^{*}\cO(-m$),
where $\cO(-m)$ is a sheaf of local sections of the line bundle $(H^{\otimes m})^{*}$,
$H$ being the hyperplane bundle on $\Ps(\wedge^{r}\C^{m})$, see
\cite{Brion,Griffith}. This means that in each local chart $U_{J}=\{[t]\in T\mid p_{J}\neq0\}$,
there is an element $\omega_{J}\in\W^{r(m-r)}(U_{J})$, which is a
basis of $\cO(U_{J})$ module $\W^{r(m-r)}(U_{J})$, such that $\wm_{J}=(p_{J'}/p_{J})^{m}\wm_{J'}$
on $U_{J}\cap U_{J'}$. Hence $p_{J}^{m}\wm_{J}=p_{J'}^{m}\wm_{J'}$
on $U_{J}\cap U_{J'}$. Put $\tau(t):=p_{J}^{m}\wm_{J}$ on $U_{J}$.
Noting $p_{J}^{m}=(p_{J}/p_{J'})^{m}p_{J'}^{m}$, the form $\tau$
is regarded as an element of $p^{*}\cO(-m)\otimes p^{*}\cO(m)=p^{*}\cO(0)$.
Hence $\tau(t)$ is determined uniquely up to constant multiple. For
$J_{0}:=(1,\dots,r)$, we take $\wm_{J_{0}}$ as follows. Let the
homogeneous coordinates $t$ of $T$ be written as $t=(t',t'')$ with
$t'\in\mat(r),t''\in\mat(r,m-r)$. Let the affine coordinates $u$
of $U_{J_{0}}$ be defined as $u=(t')^{-1}t''$. Then we take $\wm_{J_{0}}=\wedge_{i,j}du_{i,j}$,
which will be denoted as $du$. Hence in $U_{J_{0}}$, we can write
$\tau(t)$ as 
\begin{equation}
\tau(t)=(\det t')^{m}du.\label{eq:char-2-1}
\end{equation}

\begin{example*}
In the case $T=\gras(1,m)=\Ps^{m-1}$ with the homogeneous coordinates
$t=(t_{1},\dots,t_{m})$, we can take $\tau=\sum_{1\leq j\leq m}(-1)^{m+1}t_{j}dt_{1}\wedge\cdots\widehat{dt_{j}}\cdots\wedge dt_{m}$.
Then in the neibourghhood $U_{1}=\{[t]\in T\mid t_{1}\neq0\}$ with
the affine coordinates $(u_{2},\dots,u_{m})=(t_{2}/t_{1},\dots,t_{m}/t_{1})$,
we have 
\[
\tau=t_{1}^{m}d\left(\frac{t_{2}}{t_{1}}\right)\wedge\cdots\wedge d\left(\frac{t_{m}}{t_{1}}\right)=t_{1}^{m}du_{2}\wedge\cdots\wedge du_{m}.
\]
\end{example*}
We also see that 
\begin{equation}
\tau(gt)=\rho_{m}(g)\tau(t),\quad g\in\GL r\label{eq:char-3}
\end{equation}
holds. Noting $N=nr$, put 
\[
Z=\{z=(z_{1},\dots,z_{n})\in\mat'(m,N)\mid z_{j}\in\mat(m,r),\rank z_{j}=r\;(1\leq j\leq n)\}
\]
which is a Zariski open subset of $\mat'(m,N)$. Then for $z\in Z$,
$\chi(tz;\al)=\prod_{1\leq j\leq n}\left(\det(tz_{j})\right)^{\al_{j}}$
satisfies 
\begin{equation}
\chi((gt)z;\al)=(\det g)^{-m}\chi(tz;\al)\label{eq:char-4}
\end{equation}
by the assumption, which means that $\chi(tz;\al)$ gives a multivalued
section of the line bundle $L_{\rho_{m}}$ on $T$ and that the branch
locus is $\cup_{j}S_{z}^{(j)}\subset T$, where $S_{z}^{(j)}:=\{[t]\in T\mid\det(tz_{j})=0\}$.
Hence by (\ref{eq:char-3}) and (\ref{eq:char-4}), we see that the
form $\chi(tz;\al)\cdot\tau(t)$ is a multivalued analytic $r(m-r$)-form
on $X_{z}:=T\setminus\left(\cup_{j}S_{z}^{(j)}\right)$. Let $\cL_{z}$
be the local system on $X_{z}$ defined by the monodromy of $\chi(tz;\al)\cdot\tau(t)$.
\begin{defn}
The integral 
\begin{equation}
F(z,\al;C)=\int_{C(z)}\chi(tz;\al)\cdot\tau(t),\quad z\in Z\label{eq:nonconfl-2}
\end{equation}
is called the \emph{Radon hypergeometric integral} (Radon HGI), where
$C(z)$ is a cycle of the homology group $H_{r(m-r)}^{lf}(X_{z};\cL_{z})$
of locally finite chains with coefficients in the local system $\cL_{z}$.
\end{defn}

To define $F(z,\al;C)$ as a function of $z$, which we call the Radon
hypergeometric function (Radon HGF), we must take $C(z)$ depending
continuously on $z$. Put 
\[
\mathscr{X}:=\cup_{z\in Z}X_{z}=\{([t],z)\mid\det(tz_{j})\neq0,\;1\leq j\leq n\}\subset T\times Z.
\]
It is a Zariski open subset of a smooth algebraic variety $T\times Z$.
Then the projection $\pi_{2}:T\times Z\to Z$ to the second factor
$([t],z)\mapsto z$ induces a fibration $\pi:\mathscr{X}\to Z$, where
$\pi=\pi_{2}|_{\mathscr{X}}$. Note that $\mathscr{X}$ and $Z$ are
smooth algebraic varieties and $\pi$ is a surjective morphism. To
this setting, we apply the following result which can be found as
Corollary 5.1 in \cite{Verdier}.
\begin{prop}
\label{prop:Verdier} Let $X,Y$ be nonsingular algebraic varieties
of finite type and $f:X\to Y$ be a morphism. Then there exists a
Zariski open subset $W$ of $Y$ such that $f:f^{-1}(W)\to W$ is
a locally trivial topological fibration. Namely, for any point $w\in W$,
there exits an open neighbourhood $U$ (in the transcendental topology)
such that there exist a homeomorphism $g:f^{-1}(U)\to U\times f^{-1}(w)$
satisfying $\pi_{1}\circ g=f$, where $\pi_{1}$ is the projection
to the first factor. 
\end{prop}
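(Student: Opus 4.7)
The plan is to deduce the statement from the Thom--Mather theory of stratified spaces. The key tool is Thom's first isotopy lemma, which asserts that a proper map that restricts to a submersion on each stratum of a Whitney stratification is automatically a locally trivial topological fibration, with the local trivialization preserving strata.

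First, I would reduce to the proper case by compactification. Using Nagata's theorem together with Hironaka's resolution of singularities, embed $X$ as a Zariski open subvariety of a complete nonsingular variety $\bar X$ so that $f$ extends to a proper morphism $\bar f\colon\bar X\to Y$, and such that the boundary $D=\bar X\setminus X$ is a simple normal crossings divisor. Next, equip $\bar X$ with a Whitney stratification compatible with the decomposition $\bar X=X\sqcup D$; the existence of such a stratification in the complex algebraic setting is classical (Whitney, Hironaka). By generic smoothness, applied stratum by stratum, there is a Zariski open $W_{0}\subset Y$ over which the restriction of $\bar f$ to every stratum of $\bar X$ is a submersion. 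Shrinking to a smaller Zariski open $W\subset W_{0}$, one arranges in addition that Thom's $a_{f}$ regularity condition holds over $W$, which is again a Zariski generic property.

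Finally, apply Thom's first isotopy lemma to the proper stratified map $\bar f\colon\bar f^{-1}(W)\to W$. For any $w\in W$ there is a transcendentally open neighborhood $U\ni w$ and a stratum-preserving homeomorphism $\bar g\colon\bar f^{-1}(U)\to U\times\bar f^{-1}(w)$ satisfying $\pi_{1}\circ\bar g=\bar f$. Since $X$ is a union of strata of $\bar X$ (indeed the open stratum, after refinement if necessary), $\bar g$ restricts to a homeomorphism $g\colon f^{-1}(U)\to U\times f^{-1}(w)$ with $\pi_{1}\circ g=f$, which is the required local triviality.

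The main obstacle is the stratification step: one must produce a Whitney stratification of $\bar X$ compatible with $D$ that simultaneously makes $\bar f$ a stratified submersion satisfying the Thom $a_{f}$ condition over a Zariski open subset of $Y$. This is the technical heart of Verdier's argument and cannot be bypassed by elementary means when $f$ is not proper, since failure of the $a_{f}$ condition along the boundary $D$ can create vanishing cycles that obstruct local triviality of $f$ itself on all of $Y$.
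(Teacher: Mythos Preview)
The paper does not actually prove this proposition: it is quoted without proof as Corollary~5.1 of Verdier's paper \cite{Verdier}, and is used as a black box. So there is no ``paper's own proof'' to compare your proposal against.

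That said, your outline is a correct sketch of the standard argument, and it is essentially the strategy underlying Verdier's result: compactify to make the map proper, Whitney-stratify the compactification compatibly with the boundary, use generic smoothness stratum by stratum to find a Zariski open $W$ over which the map is a stratified submersion, and then invoke Thom's first isotopy lemma to obtain a stratum-preserving local trivialization, which restricts to the open stratum $X$. One small remark: Thom's \emph{first} isotopy lemma only requires properness, Whitney regularity of the stratification, and that the map be a submersion on each stratum; the Thom $a_{f}$ condition is needed for the \emph{second} isotopy lemma (trivializing a stratified map over a stratified base), so the step where you shrink $W$ to arrange $a_{f}$ regularity is unnecessary here and can be dropped.
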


Applying the proposition to our setting $\pi:\mathscr{X}\to Z$, there
is a Zariski open subset $W\subset Z$ such that $\pi:\cup_{z\in W}X_{z}\to W$
is a locally trivial fibration. It follows that we have a local system
of vector space on $W$:
\[
\bigcup_{z\in W}H_{r(m-r)}^{lf}(X_{z};\cL_{z})\to W.
\]
Then we take a continuous local section $C=\{C(z)\}$ of this flat
vector bundle. This choice assure the analyticity of $F(z,\al;C)$
on $z\in W$ for an appropriate $\al$.

We can write the integral (\ref{eq:nonconfl-2}) in terms of the coordinates
$u=(u_{i,j})\in\mat(r,m-r)$ in an affine chart $U_{J_{0}}$ of $T$.
Using (\ref{eq:char-2-1}) and (\ref{eq:char-4}), $F(z,\al;C)$ is
expressed as 
\begin{equation}
F(z,\al;C)=\int_{C(z)}\chi(\vec{u}z;\al)du=\int_{C(z)}\prod_{1\leq j\leq n}\left(\det(\vec{u}z_{j})\right)^{\al_{j}}du,\quad z=(z_{1},\dots,z_{n}),\label{eq:nonconfl-3}
\end{equation}
where $\vec{u}=(1_{r},u)$.

\subsection{Radon HGF of confluent type}

\subsubsection{Character of Jordan group}

To define the Radon HGF of confluent type, we use, in place of the
group $H\simeq\GL r^{n}$, an analogue of direct product of Jordan
groups. Let $\lm=(n_{1},\dots,n_{\ell})$ be a partition of $n$,
namely $\lm$ is a nonincreasing sequence of positive integers such
that $|\lm|:=n_{1}+\cdots+n_{\ell}=n$. For $\lm$, let $H_{\lm}$
be a subgroup of $G=\GL N$ defined by
\[
H_{\lm}=J_{r}(n_{1})\times\cdots\times J_{r}(n_{\ell}),
\]
where
\[
J_{r}(p):=\left\{ h=\left(\begin{array}{cccc}
h_{0} & h_{1} & \dots & h_{p-1}\\
 & \ddots & \ddots & \vdots\\
 &  & \ddots & h_{1}\\
 &  &  & h_{0}
\end{array}\right)\mid h_{0}\in\GL r,\ h_{i}\in\mat(r)\right\} \subset\GL{pr}
\]
is a Lie group called (generalized) Jordan group. An element $(h^{(1)},\dots,h^{(\ell)})\in J_{r}(n_{1})\times\cdots\times J_{r}(n_{\ell})$
is identified with a block diagonal matrix $\diag(h^{(1)},\dots,h^{(\ell)})\in\GL N$.
We also introduce a unipotent group $\jro(p)\subset J_{r}(p)$ by
\[
\jro(p):=\left\{ h=\left(\begin{array}{cccc}
1_{r} & h_{1} & \dots & h_{p-1}\\
 & \ddots & \ddots & \vdots\\
 &  & \ddots & h_{1}\\
 &  &  & 1_{r}
\end{array}\right)\mid h_{i}\in\mat(r)\right\} .
\]
An element $h\in J_{r}(p)$ is expressed as $h=\sum_{0\leq i<p}h_{i}\otimes\La^{i}$
using the shift matrix $\La=(\de_{i+1,j})$ of size $p$. Let us describe
the characters of the Jordan group. The following lemma is easily
shown.
\begin{lem}
\label{lem:radon-conf-1}We have a group isomorphism 
\[
J_{r}(p)\simeq\GL r\ltimes\jro(p)
\]
defined by the correspondence $\GL r\ltimes\jro(p)\ni(g,h)\mapsto g\cdot h=\sum_{0\leq i<p}(gh_{i})\otimes\La^{i}\in J_{r}(p)$,
where the semi-direct product is defined by the action of $\GL r$
on $\jro(p)$: $h\mapsto g^{-1}hg=\sum_{i}(g^{-1}h_{i}g)\otimes\La^{i}$.
\end{lem}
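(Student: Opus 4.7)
The plan is to exhibit the candidate isomorphism $\phi:\GL r\ltimes\jro(p)\to J_r(p)$ by $(g,h)\mapsto (g\otimes 1_p)\cdot h=\sum_{0\leq i<p}(gh_i)\otimes\La^i$, where $g\in\GL r$ is embedded in $\GL{pr}$ as the block-scalar matrix $g\otimes 1_p=\diag(g,\dots,g)$, and then to verify (i) that it is a bijection, (ii) that the stated conjugation action preserves $\jro(p)$, and (iii) that $\phi$ is a group homomorphism.

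First I would check bijectivity by constructing an explicit inverse. Given any $\tilde h=\sum_{0\leq i<p}\tilde h_i\otimes\La^i\in J_r(p)$, by definition $\tilde h_0\in\GL r$, so one may set $g:=\tilde h_0$ and $h:=(g^{-1}\otimes 1_p)\tilde h=\sum_{0\leq i<p}(g^{-1}\tilde h_i)\otimes\La^i$. The leading block of $h$ is $g^{-1}\tilde h_0=1_r$, hence $h\in\jro(p)$, and this factorization is clearly the unique one, establishing that $\phi$ is a bijection.

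Next I would verify that the conjugation formula $g\cdot h:=g^{-1}hg$ (with $g$ identified with $g\otimes 1_p$) indeed defines a left action of $\GL r$ on $\jro(p)$. Since $g\otimes 1_p$ commutes with every $1_r\otimes\La^i$, one gets $g^{-1}hg=1_r\otimes 1_p+\sum_{i\geq 1}(g^{-1}h_ig)\otimes\La^i$, which lies in $\jro(p)$; functoriality in $g$ is immediate.

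Finally I would verify the homomorphism property by the direct calculation
\[
\phi(g_1,h^{(1)})\cdot\phi(g_2,h^{(2)})=(g_1\otimes 1_p)h^{(1)}(g_2\otimes 1_p)h^{(2)}=(g_1g_2\otimes 1_p)\,(g_2^{-1}h^{(1)}g_2)\,h^{(2)},
\]
using that $g_2\otimes 1_p$ commutes through the $\La^i$ in $h^{(1)}$. The right-hand side is $\phi$ applied to $(g_1g_2,\,(g_2^{-1}h^{(1)}g_2)h^{(2)})$, which is precisely the product $(g_1,h^{(1)})\cdot(g_2,h^{(2)})$ in the semidirect product with the prescribed action. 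The only mildly delicate point, which I anticipate as the sole bookkeeping obstacle, is tracking the convention so that the action appears on $h^{(1)}$ by $g_2$ (not by $g_1$); this is forced by the order of multiplication above and is consistent with the lemma's formula.
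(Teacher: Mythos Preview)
The paper does not give a proof of this lemma; it simply prefaces the statement with ``The following lemma is easily shown.'' Your write-up supplies exactly the routine verification one would expect: factor any element of $J_r(p)$ uniquely as a block-scalar $g\otimes 1_p$ times a unipotent element, note that $g\otimes 1_p$ commutes with $1_r\otimes\La^i$ so that conjugation by $g$ preserves the Jordan-block shape, and check multiplicativity directly. This is correct and is the standard argument.

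One small terminological point: the map $h\mapsto g^{-1}hg$ is a \emph{right} action of $\GL r$ on $\jro(p)$, not a left one, since $(g_1g_2)^{-1}h(g_1g_2)=g_2^{-1}\bigl(g_1^{-1}hg_1\bigr)g_2$. This does not affect your computation---you correctly arrive at the multiplication law $(g_1,h^{(1)})(g_2,h^{(2)})=(g_1g_2,(g_2^{-1}h^{(1)}g_2)h^{(2)})$, which is indeed the semidirect product convention compatible with the factorization $gh$ and the action stated in the lemma---but you should adjust the word ``left'' accordingly.
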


Taking into account the expression $h=\sum_{0\leq i<p}h_{i}\otimes\La^{i}$,
we can describe the Jordan group $J_{r}(p)$ as follows. Put $R=\mat(r)$
and consider it as a $\C$-algebra. Let $R[w]$ be the ring of polynomials
in $w$ with coefficients in $R$. Then $J_{r}(p)$ is identified
with the group of units in the quotient ring of $R[w]$ by the principal
ideal $(w^{p})$:
\[
J_{r}(p)\simeq\left(R[w]/(w^{p})\right)^{\times}.
\]
Thus we can write $J_{r}(p)$ and $\jro(p)$ as 
\begin{align*}
J_{r}(p) & \simeq\left\{ h_{0}+\sum_{1\leq i<p}h_{i}w^{i}\in R[w]/(w^{p})\mid h_{0}\in\GL r\right\} ,\\
\jro(p) & \simeq\left\{ 1_{r}+\sum_{1\leq i<p}h_{i}w^{i}\in R[w]/(w^{p})\right\} .
\end{align*}
In the following we use freely this identification when necessary.

Let us determine the characters of the universal covering group $\tilde{J}_{r}(p)$
of $J_{r}(p)$. Since $J_{r}(p)\simeq\GL r\ltimes\jro(p)$ and characters
of $\widetilde{\GL r}$ are already determined in the proof of Lemma
\ref{lem:char-nonconf}, it is sufficient to determine the characters
of $\jro(p)$ which come from those of $\tilde{J}_{r}(p)$ by restriction.
Let $\jroo(p)$ be the Lie algebra of $\jro(p)$:
\[
\jroo(p)=\{X=\sum_{1\leq i<p}X_{i}w^{i}\mid X_{i}\in R\},
\]
where the Lie bracket of $X,Y\in\jroo(p)$ is given by $[X,Y]=\sum_{2\leq k<p}\sum_{i+j=k}[X_{i},Y_{j}]w^{k}$.
Note that we have an isomorphism $\jroo(p)\simeq\bigoplus_{1\leq i<p}Rw^{i}\simeq\oplus^{r-1}R$
as a vector space. To obtain a character of $\jro(p)$, we lift the
character of $\jroo(p)$ to that of $\jro(p)$ by the exponential
map. Since $\jro(p)$ is a simply connected Lie group, the exponential
map
\[
\exp:\jroo(p)\to\jro(p),\,X\mapsto\exp(X)=\sum_{k=0}^{\infty}\frac{1}{k!}X^{k}=\sum_{0\leq k<p}\frac{1}{k!}X^{k}
\]
is a biholomorphic map. Hence we can consider the inverse map $\log:\jro(p)\to\jroo(p)$,
which, for $h=1_{r}+\sum_{1\leq i<p}h_{i}w^{i}\in\jro(p)$, defines
$\te_{k}(h)\in R$ by
\begin{align*}
\log h & =\log\left(1_{r}+h_{1}w+\cdots+h_{p-1}w^{p-1}\right)\\
 & =\sum_{1\leq k<p}\frac{(-1)^{k+1}}{k}\left(h_{1}w+\cdots+h_{p-1}w^{p-1}\right)^{k}\\
 & =\sum_{1\leq k<p}\theta_{k}(h)w^{k}.
\end{align*}
Here $\te_{k}(h)$ is given as a sum of monomials of noncommutative
elements $h_{1},\dots,h_{p-1}\in R$. If a weight of $h_{i}$ is defined
to be $i$, then the monomials appearing in $\te_{k}(h)$ has the
weight $k$. For example we have 
\begin{align*}
\te_{1}(h) & =h_{1},\\
\te_{2}(h) & =h_{2}-\frac{1}{2}h_{1}^{2},\\
\te_{3}(h) & =h_{3}-\frac{1}{2}(h_{1}h_{2}+h_{2}h_{1})+\frac{1}{3}h_{1}^{3},\\
\te_{4}(h) & =h_{4}-\frac{1}{2}(h_{1}h_{3}+h_{2}^{2}+h_{3}h_{1})+\frac{1}{3}(h_{1}^{2}h_{2}+h_{1}h_{2}h_{1}+h_{2}h_{1}^{2})-\frac{1}{4}h_{1}^{4}.
\end{align*}

\begin{lem}
\label{lem:Radon-conf-2}Let $\chi:\jro(p)\to\cbatu$ be a character
obtained from that of $\tilde{J}_{r}(p)$ by restricting it to $\jro(p)$.
Then there exists $\al=(\al_{1},\dots,\al_{p-1})\in\C^{p-1}$ such
that 
\begin{equation}
\chi(h;\al)=\exp\left(\sum_{1\leq i<p}\al_{i}\Tr\,\theta_{i}(h)\right).\label{eq:char-conf-1}
\end{equation}
Conversely, $\chi$ defined by (\ref{eq:char-conf-1}) gives a character
of $\jro(p)$.
\end{lem}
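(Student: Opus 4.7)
My plan is to translate the problem to the Lie algebra $\jroo(p)$ via the exponential map, use the semidirect product structure of Lemma~\ref{lem:radon-conf-1} to enforce $\GL r$-invariance on the infinitesimal character, and then identify the invariant linear functionals by the argument already used in the proof of Lemma~\ref{lem:char-nonconf}.

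First, since $\jro(p)$ is simply connected and $\cbatu$ is abelian, I would use the bijection between characters $\chi:\jro(p)\to\cbatu$ and Lie algebra homomorphisms $\phi:=d\chi:\jroo(p)\to\C$, with $\chi$ recovered as $\chi(h)=\exp(\phi(\log h))$. Writing $\phi=\sum_{1\leq k<p}\phi_{k}$ with $\phi_{k}:Rw^{k}\to\C$ linear, the bracket $[Xw^{i},Yw^{j}]=[X,Y]w^{i+j}$ shows that the image of $[\jroo(p),\jroo(p)]$ in the weight-$k$ component equals $[R,R]w^{k}$ for $k\geq2$ and is zero for $k=1$. Hence being a Lie algebra homomorphism only constrains the $\phi_{k}$ with $k\geq2$, forcing each of them to factor through $R/[R,R]\simeq\C$.

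Next I would use the restriction hypothesis. Lemma~\ref{lem:radon-conf-1}, passed to universal covers (noting $\jro(p)$ is already simply connected), gives $\tilde{J}_{r}(p)\simeq\widetilde{\GL r}\ltimes\jro(p)$. Since $\cbatu$ is abelian, the restriction $\chi|_{\jro(p)}$ of a character of $\tilde{J}_{r}(p)$ must be invariant under conjugation by $\GL r$; on the Lie algebra this is $\Ad(\GL r)$-invariance of $\phi$, hence of each $\phi_{k}$. Because the $\GL r$-action on $Rw^{k}\simeq\gl(r)$ is ordinary conjugation on $\mat(r)$, the very same argument used in the proof of Lemma~\ref{lem:char-nonconf} produces a constant $\al_{k}\in\C$ with $\phi_{k}(Xw^{k})=\al_{k}\Tr(X)$. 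Exponentiating and using $\log h=\sum_{k}\theta_{k}(h)w^{k}$ gives the formula (\ref{eq:char-conf-1}).

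The converse is easy: for any $\al\in\C^{p-1}$, the linear form $\phi=\sum_{i}\al_{i}\Tr$ on $\jroo(p)$ annihilates every bracket $[Xw^{i},Yw^{j}]=[X,Y]w^{i+j}$ because $\Tr[X,Y]=0$, so $\exp\circ\phi\circ\log$ is a well-defined homomorphism $\jro(p)\to\cbatu$. The main subtlety, and the only place where the extension hypothesis is genuinely used, is the weight-$1$ piece: the commutator constraint says nothing there, so without the $\GL r$-invariance coming from the semidirect product structure, $\phi_{1}$ could be an arbitrary linear functional on $R$. Pinning $\phi_{1}$ down to a multiple of the trace is the only nontrivial step; all higher weights are handled by the commutator condition alone.
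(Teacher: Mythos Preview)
Your proof is correct and follows the same route as the paper: pass to the Lie algebra via $\exp/\log$, use the $\GL r$-conjugation invariance of $\phi=d\chi$ (coming from the semidirect product structure of Lemma~\ref{lem:radon-conf-1}) to force each weight component $\phi_{k}$ to be a scalar multiple of $\Tr$, and then exponentiate. Your two add-ons---the explicit converse via $\Tr[X,Y]=0$, and the observation that the commutator condition already handles the components $\phi_{k}$ with $k\geq2$ so that the extension hypothesis is only genuinely needed for the weight-$1$ piece---are correct refinements that the paper's proof leaves unstated.
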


\begin{proof}
A character $\chi$ is obtained by tracing the commutative diagram:
\[
\begin{CD}\jro(p)@>\chi>>\cbatu\\
@A\exp AA@AA\exp A\\
\jroo(p)@>\phi:=d\chi>>\C.
\end{CD}
\]
Note that $\GL r$ acts on $\jroo$ by $X=\sum_{1\leq i<p}X_{i}w^{i}\to gXg^{-1}:=\sum_{1\leq i<p}(gX_{i}g^{-1})w^{i}$.
Then $\phi$ is a linear map satisfying $\phi(gXg^{-1})=\phi(X)$
for any $g\in\GL r$ since $\chi$ is a restriction of a character
of $\tilde{J}_{r}(p)$. Since $\phi$ is linear, we see that 
\[
\phi(X)=\phi(\sum_{1\leq i<p}X_{i}w^{i})=\sum_{1\leq i<p}\phi(X_{i}w^{i})=:\sum_{1\leq i<p}\phi_{i}(X_{i}),
\]
where $\phi_{i}:\mat(r)\to\C$ is a linear map obtained by restricting
$\phi$ to $Rw^{i}$. Note that we have $\phi_{i}(gYg^{-1})=\phi_{i}(Y)$
for any $Y\in R$. By the fact shown in the proof of Lemma \ref{lem:char-nonconf},
we have $\phi_{i}(X_{i})=\al_{i}\Tr\,X_{i}$ for some $\al_{i}\in\C$.
Hence $\phi(X)=\sum_{1\leq i<p}\al_{i}\Tr\,X_{i}$. Now from the diagram
we have 
\[
\chi(h)=(\exp\circ\phi\circ(\exp)^{-1})(h)=(\exp\circ\phi)\left(\sum_{1\leq i<p}\te_{i}(h)w^{i}\right)=\exp\left(\sum_{1\leq i<p}\al_{i}\Tr\,\te_{i}(h)\right).
\]
\end{proof}
By virtue of the isomorphism in Lemma \ref{lem:radon-conf-1}, we
can determine the characters of $\tilde{J}_{r}(p)$ as a corollary
of Lemma \ref{lem:Radon-conf-2}.
\begin{cor}
\label{cor:Radon-conf-3}Any character $\chi_{p}:\tilde{J}_{r}(p)\to\cbatu$
is given by 
\[
\chi_{p}(h;\al)=(\det h_{0})^{\al_{0}}\exp\left(\sum_{1\leq i<p}\al_{i}\Tr\,\theta_{i}(\underline{h})\right),
\]
for some $\al=(\al_{0},\al_{1},\dots,a_{p-1})\in\C^{p}$, where $\underline{h}\in\jro(p)$
is defined by $h=\sum_{0\leq i<p}h_{i}w^{i}=\sum_{0\leq i<p}h_{0}(h_{0}^{-1}h_{i})w^{i}=h_{0}\cdot\underline{h}$.
\end{cor}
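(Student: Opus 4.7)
The plan is to combine the two previous lemmas through the semidirect product decomposition of Lemma \ref{lem:radon-conf-1}. Since $J_r^\circ(p)$ is unipotent (hence isomorphic to $\C^{r^2(p-1)}$ as a variety, in particular simply connected), the universal cover lifts the decomposition $J_r(p)\simeq\GL r\ltimes\jro(p)$ to $\tilde J_r(p)\simeq\widetilde{\GL r}\ltimes\jro(p)$, and any $\tilde h\in\tilde J_r(p)$ is uniquely written as $\tilde h=\tilde h_0\cdot\underline{h}$ with $\tilde h_0\in\widetilde{\GL r}$ and $\underline h\in\jro(p)$.

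Next I would restrict $\chi_p$ separately to the two factors. The restriction $\chi_p|_{\widetilde{\GL r}}$ is a character of $\widetilde{\GL r}$, so Lemma \ref{lem:char-nonconf} gives $\chi_p(\tilde h_0)=(\det h_0)^{\al_0}$ for some $\al_0\in\C$ (where $h_0$ is the projection of $\tilde h_0$ to $\GL r$). The restriction $\chi_p|_{\jro(p)}$ is a character of $\jro(p)$ obtained from a character of $\tilde J_r(p)$ by restriction, so Lemma \ref{lem:Radon-conf-2} applies directly and gives
\[
\chi_p(\underline h)=\exp\left(\sum_{1\leq i<p}\al_i\Tr\,\te_i(\underline h)\right)
\]
for some $(\al_1,\dots,\al_{p-1})\in\C^{p-1}$. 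Multiplicativity of $\chi_p$ then yields
\[
\chi_p(h;\al)=\chi_p(\tilde h_0)\,\chi_p(\underline h)=(\det h_0)^{\al_0}\exp\left(\sum_{1\leq i<p}\al_i\Tr\,\te_i(\underline h)\right),
\]
which is the claimed formula.

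For the converse, I would check that the right-hand side defines a group homomorphism on $\tilde J_r(p)$. The factor $(\det h_0)^{\al_0}$ is multiplicative in $h_0$, and by Lemma \ref{lem:Radon-conf-2} the exponential factor is multiplicative in $\underline h$. Because the semidirect-product multiplication is $(h_0,\underline h)(h_0',\underline h')=(h_0h_0',\,(h_0')^{-1}\underline h h_0'\cdot\underline h')$, compatibility requires the exponential factor to be invariant under the conjugation action of $\GL r$ on $\jro(p)$; this invariance is exactly the $\GL r$-invariance $\phi(gXg^{-1})=\phi(X)$ established in the proof of Lemma \ref{lem:Radon-conf-2}, so the product formula is indeed a well-defined character.

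The main thing to be careful about is the interaction between the semidirect-product structure and the universal cover: one must know that $\jro(p)$ is simply connected (so it is its own universal cover and the lift $\widetilde{\GL r}\ltimes\jro(p)$ is legitimate), and one must confirm that the $\GL r$-invariance condition needed for multiplicativity in the semidirect product is automatic from the constraint already imposed in Lemma \ref{lem:Radon-conf-2}. Once these two points are noted, the corollary is immediate from the two lemmas.
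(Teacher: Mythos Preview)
Your proposal is correct and follows exactly the route the paper intends: the paper does not spell out a proof but simply says the corollary follows ``by virtue of the isomorphism in Lemma~\ref{lem:radon-conf-1}'' from Lemma~\ref{lem:Radon-conf-2}, and your argument---lifting the semidirect product to $\widetilde{\GL r}\ltimes\jro(p)$ using the simple connectedness of $\jro(p)$, then restricting $\chi_p$ to each factor and invoking Lemmas~\ref{lem:char-nonconf} and~\ref{lem:Radon-conf-2}---is precisely how one fills in those details. Your care with the semidirect-product compatibility (the $\GL r$-invariance needed for the converse) is appropriate and matches the invariance already built into the proof of Lemma~\ref{lem:Radon-conf-2}.
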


Now the characters of the group $\tilde{H}_{\lm}$ are given as follows.
\begin{prop}
\label{prop:char-conf}For a character $\chi_{\lm}:\tH_{\lm}\to\cbatu$,
there exists $\alpha=(\alpha^{(1)},\dots,\alpha^{(\ell)})\in\C^{N}$,
$\alpha^{(k)}=(\alpha_{0}^{(k)},\alpha_{1}^{(k)},\dots,\alpha_{n_{k}-1}^{(k)})\in\C^{n_{k}}$
such that 
\[
\chi_{\lm}(h;\al)=\prod_{1\leq k\leq\ell}\chi_{n_{k}}(h^{(k)};\al^{(k)}),\quad h=(h^{(1)},\cdots,h^{(\ell)})\in\tilde{H}_{\lm},\;h^{(k)}\in\tilde{J}_{r}(n_{k}).
\]
\end{prop}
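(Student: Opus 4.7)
The plan is to reduce the statement to Corollary \ref{cor:Radon-conf-3} by exploiting the direct product structure of $H_\lambda$. First, I would note that since $H_\lambda = J_r(n_1) \times \cdots \times J_r(n_\ell)$ as a Lie group, and each factor $J_r(n_k)$ is connected (being homeomorphic to $\GL r \times \jor{n_k}$ with $\jor{n_k} \simeq \C^{r^2(n_k-1)}$ by Lemma \ref{lem:radon-conf-1}), the universal covering group decomposes as
\[
\tH_\lm \simeq \tilde{J}_r(n_1) \times \cdots \times \tilde{J}_r(n_\ell).
\]
This is the standard fact that the universal cover of a finite product of connected Lie groups is the product of the universal covers.

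Next, for each index $k$, consider the canonical embedding $\iota_k:\tilde{J}_r(n_k) \hookrightarrow \tH_\lm$ sending $h^{(k)}$ to $(1,\dots,1,h^{(k)},1,\dots,1)$, and define $\chi^{(k)} := \chi_\lm \circ \iota_k$. Each $\chi^{(k)}$ is a character of $\tilde{J}_r(n_k)$ with values in $\cbatu$. Since the images $\iota_1(\tilde{J}_r(n_1)),\dots,\iota_\ell(\tilde{J}_r(n_\ell))$ commute elementwise in $\tH_\lm$ and their product recovers any element $h = (h^{(1)},\dots,h^{(\ell)})$, the multiplicativity of $\chi_\lm$ gives
\[
\chi_\lm(h) = \chi_\lm\!\left(\iota_1(h^{(1)}) \cdots \iota_\ell(h^{(\ell)})\right) = \prod_{k=1}^{\ell} \chi^{(k)}(h^{(k)}).
\]

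Finally, I would apply Corollary \ref{cor:Radon-conf-3} to each $\chi^{(k)}$: there exists $\al^{(k)} = (\al_0^{(k)},\dots,\al_{n_k-1}^{(k)}) \in \C^{n_k}$ such that $\chi^{(k)}(h^{(k)}) = \chi_{n_k}(h^{(k)};\al^{(k)})$. Concatenating these weights gives $\al = (\al^{(1)},\dots,\al^{(\ell)}) \in \C^N$ (note $n_1 + \cdots + n_\ell = n$ so the total count is $N = rn$ only up to a factor, but the proposition uses $\C^N$ presumably in a loose sense; what matters is that the tuple has length $n$), which yields the required formula.

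The argument is almost routine once the product structure of the universal cover is in place; the main (very minor) obstacle is simply checking connectedness of $J_r(n_k)$ so that $\widetilde{A \times B} = \tilde{A} \times \tilde{B}$ applies. This is immediate from the semidirect product decomposition in Lemma \ref{lem:radon-conf-1}, since $\GL r$ is connected and $\jor{n_k}$ is a complex affine space.
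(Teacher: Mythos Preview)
Your proof is correct and is exactly the natural argument the paper has in mind: the proposition is stated immediately after Corollary \ref{cor:Radon-conf-3} with no proof, as a direct consequence of the product structure $H_\lm = J_r(n_1)\times\cdots\times J_r(n_\ell)$ together with that corollary. Your observation about the discrepancy $\C^N$ versus $\C^n$ is also right---the parameter $\al$ has $n_1+\cdots+n_\ell=n$ components, so ``$\C^N$'' in the statement is a slip for $\C^n$.
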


\subsubsection{Definition of HGF of confluent type }

To consider the Radon transform of a character $\chi_{\lm}$, we regard
$\chi_{\lm}$ as an object on the Grassmannian $M_{1}:=\gras(r,N)$.
So, as is done in the non-confluent case, we define at first an injective
map $\iota_{\lm}:H_{\lm}\to\mat'(r,N)$ by 
\begin{equation}
h=(h^{(1)},\dots,h^{(\ell)})\mapsto(h_{0}^{(1)},\dots,h_{n_{1}-1}^{(1)},\dots,h_{0}^{(\ell)},\dots,h_{n_{\ell}-1}^{(\ell)}),\label{eq:radon-conf-3}
\end{equation}
where $h^{(j)}=\sum_{0\leq k<n_{j}}h_{k}^{(j)}w^{k}$ with $h_{k}^{(j)}\in\mat(r)$
and the right hand side is the matrix obtained by arraying horizontally
the $r\times r$ matrices in the parenthesis. By the map $\iota_{\lm}$,
we identify $H_{\lm}$ as a Zariski open subset $\iota_{\lm}(H_{\lm})$
of $\mat'(r,N)$ and regard the character $\chi_{\lm}(\cdot;\al)$
as a multivalued analytic  function defined on this open subset. Next,
to regard $\chi_{\lm}$ as an object on the Grassmannian, consider
the action $\GL r\curvearrowright\mat'(r,N):(g,h)\mapsto gh$. Then
$\chi_{\lm}(\cdot;\al)$ behaves as 
\begin{align}
\chi_{\lm}(gh;\al) & =\left(\det g\right)^{\al_{0}^{(1)}+\cdots+\al_{0}^{(\ell)}}\prod_{j=1}^{\ell}\det(h_{0}^{(j)})^{\al_{0}^{(j)}}\exp\left(\sum_{1\leq k<n_{j}}\al_{k}^{(j)}\Tr\,\theta_{k}(\underline{h^{(j)}})\right)\nonumber \\
 & =\left(\det g\right)^{\al_{0}^{(1)}+\cdots+\al_{0}^{(\ell)}}\chi_{\lm}(h;\al).\label{eq:radon-conf-4}
\end{align}
Here we used the fact that $\underline{h^{(j)}}=(h_{0}^{(j)})^{-1}h^{(j)}=\sum_{0\leq k<n_{j}}(h_{0}^{(j)})^{-1}h_{k}^{(j)}w^{k}$
is invariant for the change $h\mapsto gh$. Taking this property into
account, we set the following assumption.
\begin{assumption}
\label{assu:Radon-conf-4}(i) $\al_{0}^{(j)}\notin\Z$ for $1\leq j\leq\ell$,

(ii) $\al_{n_{j}-1}^{(j)}\neq0$ if $n_{j}\geq2$,

(iii) $\al_{0}^{(1)}+\cdots+\al_{0}^{(\ell)}=-m$.
\end{assumption}

Let $\rho_{m}:\GL r\to\cbatu$ be a character defined by $\rho_{m}(g)=(\det g)^{m}$,
then (\ref{eq:radon-conf-4}) can be written as
\begin{equation}
\chi_{\lm}(gh;\al)=\rho_{m}(g)^{-1}\chi_{\lm}(h;\al)\label{eq:char-2-2}
\end{equation}
and it implies that $\chi_{\lm}(\cdot;\al)$ can be regarded as a
multivalued analytic section of the line bundle $L_{\rho_{m}}$ on
$\gras(r,V)\simeq\GL r\backslash\mat'(r,N)$ determined by the character
$\rho_{m}$.

The meaning of the assumption (iii) is the same as in the non-confluent
case. Put $T=\gras(r,m)=\GL r\setminus\mat'(r,m)$ as in Section \ref{subsec:Radon-HGF-nonconf}.
According as the partition $\lm$ of $n$, we write $z\in\mat'(m,N)$
as
\[
z=(z^{(1)},\dots,z^{(\ell)}),\;z^{(j)}=(z_{0}^{(j)},\dots,z_{n_{j}-1}^{(j)}),\;z_{k}^{(j)}\in\mat(m,r).
\]
Consider 
\[
Z=\{z=(z^{(1)},\dots,z^{(\ell)})\in\mat'(m,N)\mid\rank z_{0}^{(k)}=r\;(1\leq k\leq\ell)\},
\]
which is a Zariski open subset of $\mat'(m,N)$. For $z\in Z$, $\chi_{\lm}(tz;\al)$
satisfies 
\begin{equation}
\chi_{\lm}((gt)z;\al)=\rho_{m}(g)^{-1}\chi_{\lm}(tz;\al)\label{eq:radon-conf-5}
\end{equation}
by virtue of (\ref{eq:char-2-2}). This implies that $\chi_{\lm}(tz;\al)$
gives a multivalued section of the line bundle on $T$ determined
by the character $\rho_{m}$ with the branch locus $\cup_{1\leq j\leq\ell}S_{z}^{(j)}$,
where $S_{z}^{(j)}:=\{[t]\in T\mid\det(tz_{0}^{(j)})=0\}$ is a hypersurface
in $T$ of degree $r$. Put $X_{z}:=T\setminus\cup_{1\leq j\leq\ell}S_{z}^{(j)}$,
which is the complement of the arrangement of hypersurfaces $\{S_{z}^{(1)},\dots,S_{z}^{(\ell)}\}$
in $T$. By virtue of (\ref{eq:char-3}) and (\ref{eq:radon-conf-5}),
$\chi_{\lm}(tz;\al)\cdot\tau(t)$ gives a multivalued $r(m-r)$-form
on $X_{z}$. The monodromy of this form defines a local system $\cL_{z}$
of rank $1$ on $X_{z}$.
\begin{defn}
For a character $\chi_{\lm}(\cdot;\al)$ of the group $\tH_{\lm}$
satisfying Assumption \ref{assu:Radon-conf-4}, 
\begin{equation}
F_{\lm}(z,\al;C)=\int_{C(z)}\chi_{\lm}(tz;\al)\cdot\tau(t)\label{eq:confl-1}
\end{equation}
is called the Radon HGI of type $\lm$. $C(z)$ is an $r(m-r)$-cycle
of the homology group of locally finite chains $H_{r(m-r)}^{\Phi_{z}}(X_{z};\cL_{z})$
of $X_{z}$ with coefficients in the local system $\cL_{z}$ and with
the family of supports $\Phi_{z}$ determined by $\chi_{\lm}(tz;\al)$. 
\end{defn}

For this homology group and a family of supports, we discuss in the
next section.

As in the non-confluent case, we can give an expression of $F_{\lm}$
in terms of affine coordinates $u=(u_{i,j})=(t')^{-1}t''$ of an affine
chart $U_{J_{0}}=\{[t]\in T\mid\det t'\neq0\}$, where $t=(t',t'')$
is the homogeneous coordinates of $T$. Using (\ref{eq:char-3}) and
(\ref{eq:radon-conf-5}), we have 
\begin{align*}
F_{\lm}(z,\al;C) & =\int_{C(z)}\chi_{\lm}(\vec{u}z;\al)du,\\
 & =\int_{C(z)}\prod_{j=1}^{\ell}\left(\det(\vec{u}z_{0}^{(j)})\right)^{\al_{0}^{(j)}}\exp\left(\sum_{1\leq j\leq n}\sum_{1\leq k<n_{j}}\al_{k}^{(j)}\Tr\,\theta_{k}(\underline{\vec{u}z}^{(k)})\right)du
\end{align*}
where $\vec{u}=(1_{r},u)$.

\subsubsection{Cycle $C(z)$ in the integral }

First we consider the situation where $z$ is fixed. Write the integrand
of (\ref{eq:confl-1}) as 
\[
\chi_{\lm}(tz;\al)=f(t,z)\exp(g(t,z)),
\]
where
\[
f(t,z)=\prod_{j=1}^{\ell}\left(\det(tz_{0}^{(j)})\right)^{\al_{0}^{(j)}},\quad g(t,z)=\sum_{1\leq j\leq n}\sum_{1\leq k<n_{j}}\al_{k}^{(j)}\Tr\,\theta_{k}(\underline{tz}^{(k)})
\]
Note that $f(t,z)\cdot\tau(t)$ concerns the multivalued nature of
the integrand whose ramification locus is $\cup_{j}S_{z}^{(j)}$.
On the other hand, $g(t,z)$ is a rational function on $T$ with a
pole divisor $\cup_{j;n_{j}\geq2}S_{z}^{(j)}$ and concerns the nature
of exponential increase to infinity or exponential decrease to zero
of the integrand when $[t]$ approaches to the pole divisor $\cup_{j;n_{j}\geq2}S_{z}^{(j)}$.
The local system $\cL_{z}$ on $X_{z}$ is the same as that determined
by the monodromy of $f(t,z)\cdot\tau(t)$. The information of $g(t,z)$
is described using the notion of family of supports. Let $\Phi_{z}$
be a family of closed subsets of $X_{z}$ satisfying the following
condition: 
\[
A\in\Phi_{z}\iff A\cap g_{z}^{-1}(\{w\in\C\mid\mathrm{Re}\,w\geq a\})\ \ \mbox{is compact for any \ensuremath{a\in\R}},
\]
where $g(t,z)$, with $z$ fixed, is considered as a map $g_{z}:=g|_{X_{z}}:X_{z}\to\C$.
Then the family $\Phi_{z}$ enjoys the following properties:
\begin{enumerate}
\item If $A,B\in\Phi_{z}$, then $A\cup B\in\Phi_{z}.$
\item If $A\in\Phi_{z}$ and $B$ is a closed subset of $A,$ then $B\in\Phi_{z}.$
\item For any $A\in\Phi_{z}$ there exists $B\in\Phi_{z}$ whose interior
$B^{\circ}$ is a neighbourhood of $A.$ 
\end{enumerate}
In general, a family $\Phi$ of closed subsets of a topological space
$X$ is said to be a family of supports when $\Phi$ satisfies the
above conditions $1,2$ and $3$. Then we can define the homology
group $H_{k}^{\Phi_{z}}(X_{z};\cL_{z})$ of locally finite chains
with coefficients in the local system $\cL_{z}$ and with the family
of support $\Phi_{z}$, see \cite{Pham-1} for the details. Applying
Proposition \ref{prop:Verdier} to the map $g_{z}:X_{z}\to\C$, we
see that this map gives a locally trivial fibration on $\C\setminus D$
for some finite set $D\subset\C$. For any $c\in\R$, put
\[
B_{c}:=g_{z}^{-1}(\{w\in\C\mid\mathrm{Re}\,w\leq-c\})
\]
and consider the relative homology group $H_{k}(X_{z},B_{c};\cL_{z})$.
Then for $c\leq c'$, natural inclusion map $i_{c,c'}:(X_{z},B_{c'})\hookrightarrow(X_{z},B_{c})$
induces a homomorphism $(i_{c,c'})_{*}:H_{k}(X_{z},B_{c'};\cL_{z})\to H_{k}(X_{z},B_{c};\cL_{z})$.
These give a projective system with a directed set $\R$ and we have
\[
H_{k}^{\Phi_{z}}(X_{z};\cL_{z})=\mathop{\lim_{\longleftarrow}}H_{k}(X_{z},B_{c};\cL_{z}).
\]
From the fact that $g_{z}:g_{z}^{-1}(\C\setminus D)\to\C\setminus D$
is a locally trivial fibration, we see that $H_{k}^{\Phi_{z}}(X_{z};\cL_{z})\simeq H_{k}(X_{z},B_{c};\cL_{z})$
for sufficiently large $c$. Then we take an $r(m-r)$-cycle of the
homology group $H_{r(m-r)}^{\Phi_{z}}(X_{z};\cL_{z})$ as $C(z)$
in (\ref{eq:confl-1}).

To obtain the Radon HGF, we must vary $C(z)$ so that it depends continuously
on $z$. Put $\mathscr{X}:=\cup_{z\in Z}X_{z}=\{([t],z)\mid\det(tz^{(k)})\neq0,\;1\leq k\leq\ell\}\subset T\times Z$,
which is a Zariski open subset of the smooth algebraic variety $T\times Z$.
Let $\pi:\mathscr{X}\to\C\times Z$ be a morphism defined by $\pi([t],z)=(g(t,z),z)$.
Applying again Proposition \ref{prop:Verdier} to this morphism, there
is an algebraic hypersurface $\Sigma\subset\C\times Z$ such that
the map $\pi$ gives a locally trivial fibration on the inverse image
$\pi^{-1}((\C\times Z)\setminus\Sigma)$ of the complement of $\Sigma$.
We take a Zariski open subset $U\subset Z$ consisting of points $z\in Z$
such that $\Sigma\cap(\C\times\{z\})$ is a finite set. Then we have
a local system
\[
\bigcup_{z\in U}H_{r(m-r)}^{\Phi_{z}}(X_{z};\cL_{z})\to U.
\]
We take its local section as $C=\{C(z)\}$ to obtain the Radon HGF
of type $\lm$. 

We need the following important property for the Radon HGF which states
the covariance of the function under the action of $\GL m\times\hlam$
on $Z$. The action
\[
\GL m\times\mnm\times\hlam\ni(g,z,h)\mapsto gzh\in\mnm
\]
induces that on the set $Z$.
\begin{prop}
\label{prop:covariance}For the Radon HGF of type $\lm$, we have
the formulae

(1) $F_{\lm}(gz,\al;C)=\det(g)^{-r}F_{\lm}(z,\al;\tilde{C}),\quad g\in\GL m,$

(2) $F_{\lm}(zh,\al;C)=F_{\lm}(z,\al;C)\chi_{\lm}(h;\al),\quad h\in\tilde{H}_{\lm}$.
\end{prop}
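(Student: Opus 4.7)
Both parts are changes of variables in the defining integral. For part (1), I would set $s:=tg$ in $F_\lm(gz,\al;C)=\int_{C(gz)}\chi_\lm(t(gz);\al)\tau(t)$. Since $(gz)_{0}^{(j)}=gz_{0}^{(j)}$, the right $\GL m$-action on $T$ carries $S_{gz}^{(j)}=\{[t]:\det(tgz_{0}^{(j)})=0\}$ onto $S_{z}^{(j)}$, so $X_{gz}\cdot g=X_{z}$, and the transported cycle $\tilde{C}(z):=C(gz)\cdot g\subset X_{z}$ inherits the same local system and family of supports. Using $\chi_\lm(t(gz);\al)=\chi_\lm(sz;\al)$, the only remaining ingredient is the transformation law
\[
\tau(tg)=(\det g)^{r}\tau(t),\qquad g\in\GL m,
\]
which yields $\tau(sg^{-1})=(\det g)^{-r}\tau(s)$ and hence the formula in (1). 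To prove this transformation law I would observe that $\tau$ spans $\W^{r(m-r)}(T)$ up to scalar, forcing $\tau(tg)=\chi(g)\tau(t)$ for some character $\chi:\GL m\to\cbatu$, necessarily of the form $(\det g)^{k}$. To pin down $k=r$, take $g=\diag(c,1,\dots,1)$: in $U_{J_{0}}$ with $\tau=(\det t')^{m}du$ and $u=(t')^{-1}t''$, one has $t'\mapsto t'\diag(c,1,\dots,1)$ and $u\mapsto\diag(c^{-1},1,\dots,1)u$, producing a Jacobian $c^{-(m-r)}$, which combined with the factor $c^{m}$ from $(\det t')^{m}$ yields $\tau(tg)=c^{r}\tau(t)$.

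For part (2), the key algebraic input is that the embedding $\iota_\lm:H_\lm\hookrightarrow\mat'(r,N)$ intertwines group multiplication in $H_\lm$ with matrix multiplication on the right: a direct block computation using the $R[w]/(w^{n_{j}})$ description of each Jordan factor shows $\iota_\lm(h_{1})\cdot h_{2}=\iota_\lm(h_{1}h_{2})$, where on the left $h_{2}$ is regarded as a block diagonal matrix in $\GL N$. Applying this with $h_{1}$ the element of $H_\lm$ corresponding to $tz$ (on the Zariski open set where $tz\in\iota_\lm(H_\lm)$) and using that $\chi_\lm$ is a homomorphism on $\tH_\lm$, I obtain
\[
\chi_\lm(tzh;\al)=\chi_\lm(tz;\al)\cdot\chi_\lm(h;\al).
\]
Since $(zh)_{0}^{(j)}=z_{0}^{(j)}h_{0}^{(j)}$ with $h_{0}^{(j)}\in\GL r$, it follows that $S_{zh}^{(j)}=S_{z}^{(j)}$, hence $X_{zh}=X_{z}$; comparing the two sides of the factorization above also shows that the rational part of the exponent in the integrand differs by a constant in $t$, so $\cL_{zh}=\cL_{z}$ and $\Phi_{zh}=\Phi_{z}$. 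The cycle $C(z)$ therefore represents a class of the same homology group for both $z$ and $zh$, and pulling the constant $\chi_\lm(h;\al)$ out of the integral yields (2).

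The main obstacle is the $\GL m$-transformation law for $\tau$, since only the $\GL r$-equivariance has been recorded earlier. Once this is in hand, both formulas reduce to routine changes of variables combined with the group-theoretic compatibilities above; no substantial analytic difficulty arises because the geometric data $(X_{z},\cL_{z},\Phi_{z})$ is either preserved (part 2) or transported in a controlled way (part 1).
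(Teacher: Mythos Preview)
Your proof is correct and follows essentially the same route as the paper: part (2) via the multiplicativity of $\chi_\lm$ and the identification $X_{zh}=X_z$, and part (1) via the substitution $s=tg$ together with the transformation law $\tau(tg)=(\det g)^{r}\tau(t)$, deduced from the one-dimensionality of $\W^{r(m-r)}(T)$ and a test element. The only cosmetic difference is that the paper pins down the exponent $r$ using the scalar $g=a\,1_m$ (giving $a^{rm}=a^{mp}$), whereas you use $g=\diag(c,1,\dots,1)$ and a Jacobian computation; both are valid and equivalent.
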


\begin{proof}
Write $F$ for $F_{\lm}$ for the sake of simplicity. We show (2).
\begin{align*}
F(zh,\al;C) & =\int_{C(zh)}\chi_{\lm}(t(zh);\al)\cdot\tau(t)=\int_{C(zh)}\chi_{\lm}((tz)h;\al)\cdot\tau(t)\\
 & =\int_{C(z)}\chi_{\lm}(tz;\al)\cdot\chi_{\lm}(h;\al)\cdot\tau(t)=\left(\int_{C(z)}\chi_{\lm}(tz;\al)\cdot\tau(t)\right)\chi_{\lm}(h;\al)\\
 & =F(z,\al;C)\chi_{\lm}(h;\al).
\end{align*}
Here at the third equality, we used the fact that $\chi_{\lm}$ is
a character. Next we show (1). In 
\[
F(gz,\al;C)=\int_{C(gz)}\chi_{\lm}(t(gz);\al)\cdot\tau(t)=\int_{C(gz)}\chi_{\lm}((tg)z;\al)\cdot\tau(t),
\]
we make a change of variable $t\mapsto s:=tg$. Then we show that
$\tau(t)=\tau(sg^{-1})=(\det g)^{-r}\tau(s)$. Note that the form
$\tau$ is determined uniquely up to multiplication of constant. Since
$t\mapsto s=tg$ defines an automorphism of the Grassmannian $T$,
$\tau(s)=\rho(g)\tau(t)$ for some character $\rho:\GL m\to\cbatu$,
which must have the form $\rho(g)=(\det g)^{p}$ for some $p\in\Z$.
We find the integer $p$ taking $g=aI_{m}$. Using the expression
$\tau(t)=(\det t')^{m}du$, as mentioned in (\ref{eq:char-2-1}),
we see that
\[
\tau(s)=\tau(tg)=(\det(t'a))^{m}du=a^{rm}\tau(t).
\]
On the other hand $\rho(g)=\rho(a1_{m})=\det(a1_{m})^{p}=a^{mp}$.
So we have $a^{rm}=a^{mp}$ for any $a\in\cbatu$, hence $p=r$. Thus
\begin{align*}
F(gz,\al;C) & =\int_{C(gz)}\chi_{\lm}((tg)z;\al)\cdot\tau(t)=\int_{\tilde{C}(z)}\chi_{\lm}(sz;\al)\cdot\tau(sg^{-1})\\
 & =(\det g)^{-r}\int_{\tilde{C}(z)}\chi_{\lm}(sz;\al)\cdot\tau(s)=(\det g)^{-r}F(z,\al;\tilde{C}),
\end{align*}
where $\tilde{C}(z)$ is the chain obtained from $C(gz)$ as the image
of the map $T\ni[t]\mapsto[tg]\in T$.
\end{proof}

\section{\label{sec:Radon-hypergeometric-system}Radon hypergeometric system}

In this section we mention about the system of differential equations
satisfied by the Radon HGF, which we call the Radon hypergeometric
system (Radon HGS). The system consists of the differential equations
characterizing the image of Radon transform, which form a main body
of the system, and of the equations which are infinitesimal form of
the covariance property of Radon HGF given in Proposition \ref{prop:covariance}.
The Radon HGS for the non-confluent case was already given by Oshima
\cite{Ohsima}. For the HGS on Hermitian symmetric spaces, see \cite{Tanisaki}.
The important part of their work is that the differential equations
\emph{characterizing} the image of Radon transform are given. An easy
part is that functions given by a Radon transform satisfy the differential
equations used for the characterization. We give here the Radon HGS
and an elementary proof of the easy part for the sake of completeness
of presentation.

For the matrix $z\in\mat(m,N)$ of independent variables, its entries
are indexed as $z=(z_{i,j})_{1\leq i\leq m,1\leq j\leq N}$. Let $D_{i,j}$
be the differentiation with respect to $z_{i,j}$: $D_{i,j}=\pa/\pa z_{i,j}$.
Put $[1,m]:=\{i\in\Z\mid1\leq i\leq m\}$ and $[1,N]:=\{j\in\Z\mid1\leq j\leq N\}$.
Take subsets $I\subset[1,m]$ and $J\subset[1,N]$ of cardinality
$|I|=|J|=r+1$ and let $I=\{i_{1}<\cdots<i_{r+1}\}$, $J=\{j_{1}<\cdots<j_{+1}\}$.
For such $I,J$, define the $(r+1)$-th order differential operator
\[
D_{I,J}=\det\left(D_{i_{\mu},j_{\nu}}\right)_{1\leq\mu,\nu\leq r+1}.
\]

\begin{prop}
The Radon HGF $F_{\lm}(z,\al;C)$ satisfies the differential equations
\begin{align*}
(i)\quad & D_{I,J}F=0\quad\text{for}\;\forall I\subset[1,m],\forall J\subset[1,N],|I|=|J|=r+1,\\
(ii)\quad & \text{infinitesimal form of }F(zh)=\chi_{\lm}(h;\al)F(z),\quad h\in\tilde{H_{\lm}},\\
(iii)\quad & \text{infinitesimal form of }F(gz)=(\det g)^{-r}F(z),\quad g\in\GL m.
\end{align*}
\end{prop}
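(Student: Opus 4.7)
The plan is to obtain (ii) and (iii) immediately from Proposition~\ref{prop:covariance} by differentiation, and to prove (i) by a pointwise vanishing of the integrand combined with differentiation under the integral sign. The key observation for (i) is that $\chi_{\lm}(tz;\al)$ depends on $z$ only through the matrix product $y:=tz$, and that an order-$(r+1)$ determinantal operator in the $z$-variables must annihilate any such function for rank reasons.

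To carry out (i), I would first note that the $D_{i,j}$ mutually commute, so $D_{I,J}=\det(D_{i_{\mu},j_{\nu}})_{1\leq\mu,\nu\leq r+1}$ is unambiguously defined, and that $\tau(t)$ is independent of $z$; hence it suffices to show $D_{I,J}[\chi_{\lm}(tz;\al)]\equiv 0$ pointwise in $t$. By the chain rule, for any smooth function of $y=tz$, the operator $D_{i,j}$ acts as $\sum_{\sigma=1}^{r}t_{\sigma,i}\,\pa/\pa y_{\sigma,j}$. The coefficients $t_{\sigma,i}$ commute with the $y$-partials, so the matrix $(D_{i_{\mu},j_{\nu}})_{\mu,\nu=1}^{r+1}$ factors, over a commutative ring, as $TP$, where $T=(t_{\sigma,i_{\mu}})$ has size $(r+1)\times r$ and $P=(\pa/\pa y_{\sigma,j_{\nu}})$ has size $r\times(r+1)$. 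The Cauchy--Binet formula then gives
\[
D_{I,J}=\det(TP)=\sum_{S\subset\{1,\dots,r\},\,|S|=r+1}\det(T_{\cdot,S})\det(P_{S,\cdot})=0,
\]
since no $(r+1)$-subset of $\{1,\dots,r\}$ exists. Pulling $D_{I,J}$ inside the integral (legitimate under Assumption~\ref{assu:radon-nonconf} or Assumption~\ref{assu:Radon-conf-4}, which ensure local integrability of the integrand and all its $z$-derivatives on $C(z)$) yields $D_{I,J}F_{\lm}(z,\al;C)=0$.

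For (ii), I would substitute $h=\exp(\ep X)$ with $X$ in the Lie algebra of $\tilde H_{\lm}$ into the identity $F_{\lm}(zh,\al;C)=\chi_{\lm}(h;\al)\,F_{\lm}(z,\al;C)$ of Proposition~\ref{prop:covariance} and differentiate at $\ep=0$; this gives the first-order equation $\sum_{i,j,k}z_{i,k}X_{k,j}\,D_{i,j}F_{\lm}=d\chi_{\lm}(X;\al)\,F_{\lm}$, with the scalar $d\chi_{\lm}(X;\al)$ read off from the proofs of Lemma~\ref{lem:char-nonconf} and Lemma~\ref{lem:Radon-conf-2}. For (iii), I would substitute $g=\exp(\ep Y)$ with $Y\in\gl(m)$ in $F_{\lm}(gz,\al;C)=(\det g)^{-r}F_{\lm}(z,\al;\tilde C)$ and differentiate to obtain $\sum_{i,j,k}Y_{i,k}z_{k,j}\,D_{i,j}F_{\lm}=-r\,\Tr(Y)\,F_{\lm}$. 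The only genuinely non-routine step in the proof is the rank argument in (i); once the factorization $D_{i,j}\mapsto\sum_{\sigma}t_{\sigma,i}\,\pa/\pa y_{\sigma,j}$ is written down, the vanishing of $D_{I,J}$ on any function of $tz$ is forced by pure linear algebra, and the justification of differentiation under the integral is routine under the convergence assumptions imposed on $\al$.
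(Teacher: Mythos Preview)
Your proof is correct and follows essentially the same route as the paper: for (i) you pull $D_{I,J}$ under the integral, apply the chain rule to factor $(D_{i_\mu,j_\nu})$ as a product of an $(r+1)\times r$ matrix of $t$'s and an $r\times(r+1)$ matrix of $y$-partials, and conclude vanishing by a rank/Cauchy--Binet argument---exactly the paper's computation, phrased with Cauchy--Binet rather than the bare rank observation. The paper does not spell out (ii) and (iii) beyond invoking Proposition~\ref{prop:covariance}, so your one-parameter-subgroup differentiation is a reasonable elaboration of what the paper leaves implicit.
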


\begin{proof}
Note that $\chi_{\lm}$ is regarded as a homogeneous multivalued function
of degree $-m$ on the space $\mat'(r,N)$ of homogeneous coordinates
of $M_{1}=\gras(r,N)$ via the map $\iota_{\lm}:H_{\lm}\to\mat'(r,N)$
given by (\ref{eq:radon-conf-3}). Let $v=(v_{i,j})$ be the homogeneous
coordinates of $M_{1}$. Put $\pa_{i,j}:=\pa/v_{i,j}$. For a given
homogeneous (multivalued) function $f(v)$ of degree $-m$ on $\mat'(r,N)$.
Define $\Phi(z)=\int_{C}f(tz)\cdot\tau(t)$. Note that $f$ is $\chi_{\lm}$
in our case. We show that $D_{I,J}\Phi=0$ for any subset $I\subset[1,m],J\subset[1,N],|I|=|J|=r+1$.
Note that 
\[
D_{I,J}\Phi(z)=\int_{C}D_{I,J}f(tz)\cdot\tau
\]
and $D_{I,J}=\sum_{\sigma\in\Si_{r+1}}(\sgn\,\sigma)D_{i_{\sigma(1)},j_{1}}\cdots D_{i_{\sigma(r+1)},j_{r+1}}$.
Note also that 
\[
D_{i,j}f(tz)=\frac{\pa}{\pa z_{i,j}}f(tz)=\sum_{k=1}^{r}\frac{\pa f}{\pa v_{k,j}}(tz)\cdot t_{k,i}=\left(\sum_{k=1}^{r}t_{k,i}\pa_{k,j}f\right)_{v=tz}
\]
It follows that 
\begin{align*}
D_{I,J}f(tz) & =\left[\sum_{\sigma\in\Si_{r+1}}(\sgn\,\sigma)\left(\sum_{k=1}^{r}t_{k,i_{\sigma(1)}}\pa_{k,j_{1}}\right)\cdots\left(\sum_{k=1}^{r}t_{k,i_{\sigma(r+1)}}\pa_{k,j_{r+1}}\right)f\right]_{u=tz}\\
 & =\left[\det\left(\sum_{k=1}^{r}t_{k,i_{p}}\pa_{k,j_{q}}\right)_{1\leq p,q\leq r+1}f\right]_{u=tz}.
\end{align*}
The differential operator in the last line is written as 
\[
\det\left[\left(\begin{array}{ccc}
t_{1,i_{1}} & \dots & t_{r,i_{1}}\\
\vdots &  & \vdots\\
t_{1,i_{r+1}} & \dots & t_{r,i_{r+1}}
\end{array}\right)\left(\begin{array}{ccc}
\pa_{1,j_{1}} & \dots & t_{1,j_{r+1}}\\
\vdots &  & \vdots\\
t_{r,j_{1}} & \dots & t_{r,j_{r+1}}
\end{array}\right)\right]
\]
and it turns out to be zero since it is the determinant of the product
of matrices of size $(r+1)\times r$ and of size $r\times(r+1)$ and
each matrix has the rank at most $r$. Hence we have shown that $D_{I,J}\Phi(z)$=0.
\end{proof}

\section{\label{sec:Radon-HGF-as}Radon HGF as an extension of classical HGF}

In this section, we make clear the relation of the Radon HGF to some
of the classical HGFs and of the HGFs defined by Hermitian matrix
integral. We begin by recalling these classical HGFs and their Hermitian
matrix integral analogues.

\subsection{\label{subsec:Classical-HGF-matrix}Classical HGF and its Hermitian
matrix integral analogue}

\subsubsection{Beta, Gamma and Gaussian integral}

One may say that among the member of the classical HGF family, the
simplest and most fundamental ones are the beta function, the gamma
function, and the Gaussian integral:
\begin{align*}
B(a,b) & :=\int_{0}^{1}u^{a-1}(1-u)^{b-1}du,\\
\G(a) & :=\int_{0}^{\infty}e^{-u}u^{a-1}du,\\
G & :=\int_{-\infty}^{\infty}e^{-\frac{1}{2}u^{2}}du=\sqrt{2\pi}.
\end{align*}
It is well known that $B(a,b)$ converges for $(a,b)\in\C^{2}$ such
that $\re\,a,\re\,b>0$ and $\G(a)$ for $\re\,a>0$. The Gaussian
integral is not a function. However we consider it is natural to include
it in one group as one may understand in the subsequent sections. 

The Hermitian integral analogue is known for these integrals. Let
$\herm$ be the set of $r\times r$ complex Hermitian matrices. It
is a real vector space of dimension $r^{2}$. For $U=(U_{i,j})\in\herm,$
let $dU$ be a volume form on $\herm$, which is the usual Euclidean
volume form given by

\begin{equation}
dU=\bigwedge_{i=1}^{r}dU_{i,i}\bigwedge_{i<j}\left(d\re(U_{i,j})\wedge d\im(U_{i,j})\right).\label{eq:hermInt-1}
\end{equation}
Note that this form can be written as 
\begin{equation}
dU=\left(\frac{\sqrt{-1}}{2}\right)^{r(r-1)/2}\bigwedge_{i=1}^{r}dU_{i,i}\bigwedge_{i\neq j}(dU_{i,j}\wedge dU_{j,i}).\label{eq:hermInt-2}
\end{equation}

The matrix integral version of the beta function, the gamma function
and the Gaussian integral are defined by 
\begin{align}
B_{r}(a,b) & :=\int_{0<U<1_{r}}(\det U)^{a-r}(\det(1_{r}-U))^{b-r}\,dU,\nonumber \\
\G_{r}(a) & :=\int_{U>0}\etr(-U)(\det U)^{a-r}\,dU,\label{eq:hermInt-3}\\
G_{r} & :=\int_{\herm}\etr(-\frac{1}{2}U^{2})\,dU,\nonumber 
\end{align}
respectively, where $\etr(U)=\exp(\Tr(U))$, $\Tr(U)$ being the trace
of $U$. The domain of integration is the set of positive definite
Hermitian matrices $U>0$ for the gamma, and the subset of $\herm$
satisfying $U>0$ and $1_{r}-U>0$ for the beta. It can be shown that
the gamma integral converges for $\re(a)>r-1$ and the beta integral
converges for $\re(a)>r-1,\re(b)>r-1$, and they define holomorphic
functions there. It is seen that $B_{r},\G_{r}$ and $G_{r}$ reduces
to the classical beta, gamma and Gaussian integral when $r=1$, respectively.
The following result for $B_{r}$ and $\G_{r}$ is known\cite{Faraut}.
\begin{prop*}
The following formulas hold.
\begin{align*}
(i)\quad & \G_{r}(a)=\pi^{\frac{r(r-1)}{2}}\prod_{i=1}^{r}\G(a-i+1).\\
(ii)\quad & B_{r}(a,b)=\frac{\G_{r}(a)\G_{r}(b)}{\G_{r}(a+b)}.
\end{align*}
\end{prop*}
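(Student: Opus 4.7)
The plan is to establish (i) first by induction on $r$, and then to deduce (ii) by a change of variables in the product $\G_r(a)\G_r(b)$.

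For (i), I would peel off the last row and column of $U \in \herm$, $U > 0$. Writing
\[
U = \begin{pmatrix} U_1 & v \\ v^{*} & u \end{pmatrix}, \qquad U_1 \in \mathscr{H}(r-1),\ U_1 > 0,\ v \in \C^{r-1},\ u \in \R,
\]
one has $U > 0$ iff $U_1 > 0$ and $w := u - v^{*} U_1^{-1} v > 0$, together with $\det U = (\det U_1)\, w$ and $\Tr U = \Tr U_1 + v^{*} U_1^{-1} v + w$. Changing the $u$-variable to $w$ (Jacobian $1$) decouples the integral: the $w$-integral gives $\G(a-r+1)$; the $v$-integral is a complex Gaussian in $r-1$ variables that, after the substitution $v = U_1^{1/2} y$ (with Jacobian $\det U_1$), produces $\pi^{r-1}\det U_1$. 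What is left is exactly $\G_{r-1}(a)$, so
\[
\G_r(a) = \pi^{r-1}\,\G(a-r+1)\,\G_{r-1}(a),
\]
and iteration from $\G_1(a) = \G(a)$ yields (i).

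For (ii), I would compute
\[
\G_r(a)\G_r(b) = \int_{U,V>0} \etr(-(U+V))\,(\det U)^{a-r}(\det V)^{b-r}\,dU\,dV
\]
by substituting $S = U+V$ and $W = S^{-1/2} U S^{-1/2}$, so that $U = S^{1/2}W S^{1/2}$ and $V = S^{1/2}(1_r - W) S^{1/2}$. The domain $\{(U,V):U,V>0\}$ becomes $\{(S,W): S>0,\ 0<W<1_r\}$, with $\det U = (\det S)(\det W)$ and $\det V = (\det S)(\det(1_r-W))$. The essential Jacobian step is that the congruence $X \mapsto S^{1/2} X S^{1/2}$ on $\herm$ (with $S$ fixed) has Jacobian $(\det S)^{r}$, so $dU\,dV = (\det S)^{r}\,dW\,dS$. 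Substituting and separating variables decouples the integral as
\[
\G_r(a)\G_r(b) = \left(\int_{S>0} \etr(-S)\,(\det S)^{a+b-r}\,dS\right)\!\left(\int_{0<W<1_r} (\det W)^{a-r}(\det(1_r-W))^{b-r}\,dW\right),
\]
which is $\G_r(a+b)\cdot B_r(a,b)$, giving (ii).

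The main obstacle is the Jacobian for the congruence action $X \mapsto A X A^{*}$ on $\herm$, namely that it equals $|\det A|^{2r} = (\det(AA^{*}))^{r}$. I would verify this once and for all by reducing via polar decomposition to two cases: unitary $A$ (which acts isometrically on $\herm$ with respect to the Euclidean structure coming from (\ref{eq:hermInt-2}), hence Jacobian $1$), and positive diagonal $A$ (a direct entrywise computation using (\ref{eq:hermInt-2})). Everything else in the argument is classical one-variable integration plus the matrix identities for $\det$ and $\Tr$ under block decomposition, so no further analytic difficulty arises.
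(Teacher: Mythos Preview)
Your argument is correct: the inductive peel for $\G_r(a)$ via the Schur complement and the $(U,V)\mapsto(S,W)$ change of variables for $B_r$ are the standard computations, and your Jacobian claim $|\det A|^{2r}$ for $X\mapsto AXA^{*}$ on $\herm$ is exactly what is needed. There is nothing to compare against, though: the paper does not prove this proposition at all but simply quotes it as known from Faraut--Kor\'anyi, so your write-up supplies what the paper omits.
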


\subsubsection{\label{subsec:class-HGF-matrix-2}Gauss HGF and its confluent family}

Next we consider the Gauss HGF and its confluent family: Kummer's
confluent HGF, Bessel function, Hermite-Weber function and Airy function.
They form an important class in the theory of special functions. The
Gauss HGF is defined by the power series
\[
\,_{2}F_{1}(a,b,c;x)=\sum_{m=0}^{\infty}\frac{(a)_{m}(b)_{m}}{(c)_{m}m!}x^{m}
\]
under the condition $c\notin\Z_{\leq0}$, represented by the complex
integral
\begin{equation}
\text{Gauss}:\;\frac{\G(c)}{\G(a)\G(c-a)}\int_{0}^{1}u^{a-1}(1-u)^{c-a-1}(1-ux)^{-b}du,\label{eq:hermInt-4}
\end{equation}
and satisfies the differential equation in the complex domain:
\[
x(1-x)y''+\{c-(a+b+1)x\}y'-aby=0.
\]
Any other solution of this equation can be obtained by the integral
(\ref{eq:hermInt-4}) taking an appropriate path of integration $C$
in $u$-plane instead of $\overrightarrow{0,1}$ . For the other members
of the confluent family of Gauss, the situation is similar. They are
characterized as a solution of the second order differential equations
and the solutions are given by the integral:
\begin{align*}
\text{Kummer}:\quad & \int_{C}e^{xu}u^{a-1}(1-u)^{c-a-1}du,\\
\text{Bessel}:\quad & \int_{C}e^{xu-\frac{1}{u}}u^{c-1}du,\\
\text{Hermite-Weber}:\quad & \int_{C}e^{xu-\frac{1}{2}u^{2}}u^{-c-1}dt,\\
\text{Airy}:\quad & \int_{C}e^{xu-\frac{1}{3}u^{3}}dt.
\end{align*}

A Hermitian integral analogue of these is considered in various contexts
\cite{Faraut,inamasu-ki,kimura-1,Kontsevich,Mehta,muirhead-2}. They
are functions of $X\in\herm$ given by

\begin{align}
\text{Gauss}:\quad & \frac{\G_{r}(c)}{\G_{r}(a)\G_{r}(c-a)}\int_{0<U<1_{r}}(\det U)^{a-r}(\det(1_{r}-U))^{c-a-r}(\det(1_{r}-UX))^{-b}\,dU,\nonumber \\
\text{Kummer}:\quad & \int_{C}\etr(UX)(\det U)^{a-r}(\det(1_{r}-U))^{c-a-r}\,dU,\nonumber \\
\text{Bessel}:\quad & \int_{C}\etr(UX-U^{-1})(\det U)^{c-r}\,dU,\label{eq:hermInt-5}\\
\text{Hermite-Weber}:\quad & \int_{C}\etr\left(UX-\frac{1}{2}U^{2}\right)(\det U)^{-c-r}\,dU,\nonumber \\
\text{Airy}:\quad & \int_{C}\etr\left(UX-\frac{1}{3}U^{3}\right)\,dU.\nonumber 
\end{align}
Here we do not enter into the discussion of the domain of integration
$C$. It is known \cite{Faraut,kimura-1,muirhead} that each of the
matrix integrals gives a function of eigenvalues $x_{1},\dots,x_{r}$
of $X$ and satisfies the system of partial differential equations
of the second order. The system is holonomic and the dimension of
the solution space at generic points is $2^{r}$.

\subsubsection{Classical HGF of several variables}

As an example of classical HGF of several variables, we take Lauricella's
$F_{D}$:
\begin{align*}
F_{D}(a,b_{1},\dots,b_{p},c;x_{1},\dots,x_{p}) & =\sum_{m_{1},\dots,m_{p}=0}^{\infty}\frac{(a)_{m_{1}+\cdots+m_{p}}(b_{1})_{m_{1}}\cdots(b_{p})_{m_{p}}}{(c)_{m_{1}+\cdots+m_{p}}m_{1}!\cdots m_{p}!}x_{1}^{m_{1}}\cdots x_{p}^{m_{p}}\\
 & =\frac{\G(c)}{\G(a)\G(c-a)}\int_{0}^{1}u^{a-1}(1-u)^{c-a-1}\prod_{j=1}^{p}(1-ux_{j})^{-b_{j}}du.
\end{align*}
When $p=1$, it reduces to the Gauss HGF, and so it is one of the
extensions of the Gauss HGF to several independent variables case.
In Section \ref{subsec:Herm-Radon-3}, we show that an Hermitian integral
analogue of $F_{D}$ appears from the Radon HGF. For $F_{D}$ and
for the other classical HGF of several variables, see \cite{Appell-2,Erdelyi}.

\subsection{\label{subsec:Herm-Radon}HGF by Hermitian matrix integral and Radon
HGF}

We show that the classical HGFs and Hermitian matrix integral analogues
explained in Section \ref{subsec:Classical-HGF-matrix} are understood
as particular cases of Radon HGF. To establish a connection, let us
consider the Radon HGF in a more restricted situation. Namely we consider
the Radon HGF of type $\lm$ in the case $m=2r,N=nr$ and assume some
additional condition on the space of independent variable $z\in\mat'(2r,nr)$.
Note that $n\geq3$ since $N>m$ by assumption. 

Let a partition $\lm=(n_{1},\dots,n_{\ell})$ of $n$ be given. We
say that $\mu=(m_{1},\dots,m_{\ell})\in\Z_{\geq0}^{\ell}$ is a subdiagram
of $\lm$ of weight $2$ if it satisfies 

\[
0\leq m_{k}\leq n_{k}\quad(\forall k)\quad\mbox{and}\quad|\mu|:=m_{1}+\cdots+m_{\ell}=2.
\]
More explicitly, $\mu$ has the form either 
\begin{equation}
\mu=(0,\dots,0,\overset{i}{1},0,\dots,0,\overset{j}{1},0,\dots,0)\;\text{or }\mu=(0,\dots,0,\overset{i}{2},0,\dots,0).\label{eq:red-1}
\end{equation}
The first case means that $m_{i}=m_{j}=1$ and $m_{k}=0$ for $k\neq i,j$.
Using this notation we define a Zariski open subset $Z_{\lm}\subset\mat'(2r,nr)$
of the space of independent variables as follows. We write $z\in\mat'(2r,nr)$
as $z=(z^{(1)},\dots,z^{(\ell)})$ arraying blocks, where the $j$-th
block $z^{(j)}$ is a $2r\times n_{j}r$ matrix. For any $j$, $z^{(j)}$
is also written as a block matrix:
\[
z^{(j)}=(z_{0}^{(j)},\dots,z_{n_{j}-1}^{(j)})=\left(\begin{array}{ccc}
z_{0,0}^{(j)} & ,\dots, & z_{0,n_{j}-1}^{(j)}\\
z_{1,0}^{(j)} & ,\dots, & z_{1,n_{j}-1}^{(j)}
\end{array}\right),\quad z_{p,q}^{(j)}\in\mat(r).
\]
For a subdiagram $\mu\subset\lm$ with $|\mu|=2$, $\mu$ is either
of the form in (\ref{eq:red-1}). According as the form of $\mu$,
put
\[
z_{\mu}=(z_{0}^{(i)},z_{0}^{(j)})\;\text{or }z_{\mu}=(z_{0}^{(i)},z_{1}^{(i)}).
\]
Then $Z_{\lm}$ is defined as 
\[
Z_{\lm}:=\{z\in\mat(2r,nr)\mid\det z_{\mu}\neq0\;\text{for any subdiagram}\;\mu\subset\lm,|\mu|=2\}.
\]
It is easily seen that $Z_{\lm}$ is invariant by the action $\GL{2r}\curvearrowright\mat'(2r,nr)\curvearrowleft H_{\lm}$.
Taking into account the covariance property for the Radon HGF with
respect to the action of $\GL{2r}\times H_{\lm}$ given in Proposition
\ref{prop:covariance}, we try to take the independent variable $z$
to a simpler form $\bx\in Z_{\lm}$ which gives a representative of
the orbit $O(z)$ of $z$.

\subsubsection{\label{subsec:herm-radon-1}Radon HGF for $(m,N)=(2r,3r)$}
\begin{lem}
\label{lem:Her-Ra-1}Let $\lm$ be a partition of $3$. For any $z\in Z_{\lm}$,
we can take a representative $\bx\in Z_{\lm}$ of the orbit $O(z)$
as given in the following table. 

\bigskip

\begin{tabular}{|c|c|}
\hline 
$\lm$ & normal form $\bx$\tabularnewline
\hline 
\hline 
$(1,1,1)$ & $\left(\begin{array}{ccc}
1_{r} & 0 & 1_{r}\\
0 & 1_{r} & -1_{r}
\end{array}\right)$\tabularnewline
\hline 
$(2,1)$ & $\left(\begin{array}{ccc}
1_{r} & 0 & 0\\
0 & 1_{r} & 1_{r}
\end{array}\right)$\tabularnewline
\hline 
$(3)$ & $\left(\begin{array}{ccc}
1_{r} & 0 & 0\\
0 & 1_{r} & 0
\end{array}\right)$\tabularnewline
\hline 
\end{tabular}
\end{lem}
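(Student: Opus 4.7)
The plan is to use the $\GL{2r}\times H_{\lm}$-covariance from Proposition \ref{prop:covariance}: one seeks $(g,h)\in\GL{2r}\times H_{\lm}$ such that $g\,z\,h$ equals the tabulated $\bx$. In each case I proceed in two stages. First, I use the left $\GL{2r}$-action to normalize the pair of $r$-blocks singled out by one of the invertibility conditions $\det z_{\mu}\neq 0$ to $1_{2r}=\left(\begin{smallmatrix}1_{r}&0\\0&1_{r}\end{smallmatrix}\right)$. Then I apply the right $H_{\lm}$-action, together with a compensating $g'\in\GL{2r}$ that preserves the already-normalized prefix, to bring the remaining block(s) to the desired form. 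The remaining subdiagram conditions in the definition of $Z_{\lm}$ supply exactly the invertibility needed for the auxiliary factors chosen below.

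For $\lm=(1,1,1)$, with $H_{\lm}\simeq\GL{r}^{3}$, I apply $g=(z^{(1)},z^{(2)})^{-1}$ to reduce $z$ to $\bigl(\left(\begin{smallmatrix}1_{r}\\0\end{smallmatrix}\right),\left(\begin{smallmatrix}0\\1_{r}\end{smallmatrix}\right),\left(\begin{smallmatrix}u_{1}\\u_{2}\end{smallmatrix}\right)\bigr)$; the remaining two conditions $\det z_{\mu}\neq 0$ force $u_{1},u_{2}\in\GL{r}$. I then absorb these by taking $h=\diag(u_{1},-u_{2},1_{r})\in H_{\lm}$ followed by $g'=\diag(u_{1}^{-1},-u_{2}^{-1})\in\GL{2r}$, which restores the first two blocks to the identity and turns the last into $\left(\begin{smallmatrix}1_{r}\\-1_{r}\end{smallmatrix}\right)$. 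For $\lm=(2,1)$, with $H_{\lm}=J_{r}(2)\times\GL{r}$, I apply $g=(z_{0}^{(1)},z_{0}^{(2)})^{-1}$ to reduce to $\bigl(\left(\begin{smallmatrix}1_{r}\\0\end{smallmatrix}\right),\left(\begin{smallmatrix}a\\b\end{smallmatrix}\right),\left(\begin{smallmatrix}0\\1_{r}\end{smallmatrix}\right)\bigr)$ with $b\in\GL{r}$ (from the condition $\det(z_{0}^{(1)},z_{1}^{(1)})\neq 0$), then choose $h_{0}^{(1)}=1_{r}$, $h_{1}^{(1)}=-a$, $h^{(2)}=b$ so that the middle block becomes $\left(\begin{smallmatrix}0\\1_{r}\end{smallmatrix}\right)$. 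For $\lm=(3)$, with $H_{\lm}=J_{r}(3)$, I apply $g=(z_{0},z_{1})^{-1}$ to reduce to $\bigl(\left(\begin{smallmatrix}1_{r}\\0\end{smallmatrix}\right),\left(\begin{smallmatrix}0\\1_{r}\end{smallmatrix}\right),\left(\begin{smallmatrix}a\\b\end{smallmatrix}\right)\bigr)$ and choose $h_{0}=1_{r}$, $h_{1}=-b$, $h_{2}=-a$ in $J_{r}(3)$; the compensating $g'\in\GL{2r}$ preserving the first two blocks then makes the last block vanish.

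The main obstacle is the bookkeeping in the two Jordan cases: one must verify that the compensating $g'\in\GL{2r}$ (whose block structure is inherited from the inverse of the upper-triangular element of $J_{r}(p)$ acting on the first two columns) really preserves the normalized prefix, and that the unipotent shifts $h_{1},h_{2}$ available in $J_{r}(p)$ suffice to zero out precisely the entries $a,b$ of the remaining block. Both are short linear-algebra verifications using the explicit form of the shift matrix $\La$, and no further input beyond the definition of $Z_{\lm}$ is required.
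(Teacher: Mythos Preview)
Your approach is correct and essentially the same as the paper's: first normalize a designated pair of $r$-blocks to $1_{2r}$ by the left $\GL{2r}$-action, then use the right $H_{\lm}$-action together with a compensating block-diagonal or block-upper-triangular $g'\in\GL{2r}$ to clean up the remaining block. The paper writes out the cases $(1,1,1)$ and $(2,1)$ explicitly (with equivalent though slightly different choices of which $H_{\lm}$-factor to vary) and declares $(3)$ similar, so your sketch in fact supplies a bit more detail than the paper's own argument.
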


\begin{proof}
We prove the case $\lm=(1,1,1)$. Write $z\in Z_{\lm}$ as 
\[
z=(z_{1},z_{2},z_{3})=\left(\begin{array}{ccc}
z_{0,1} & z_{0,2} & z_{0,3}\\
z_{1,1} & z_{1,2} & z_{1,3}
\end{array}\right),\quad z_{i,j}\in\mat(r).
\]
Since $\det(z_{1},z_{2})\neq0$ for $z\in Z_{\lm}$. Put $g_{1}=(z_{1},z_{2})^{-1}\in\GL{2r}$,
then $g_{1}z$ has the form
\[
g_{1}z=\left(\begin{array}{ccc}
1_{r} & 0 & v_{0}\\
0 & 1_{r} & v_{1}
\end{array}\right),\quad v_{0},v_{1}\in\mat(r).
\]
Since $Z_{\lm}$ is invariant by the action of $\GL{2r}\times H_{\lm}$,
we see that $g_{1}z\in Z_{\lm}$. This condition is $v_{0},v_{1}\in\GL r$.
Take $h=\diag(1_{r},h_{2},h_{3})\in H_{\lm}$ and $g_{2}=\diag(1_{r},h_{2}^{-1})\in\GL{2r}$.
Then 
\[
g_{2}g_{1}zh=\left(\begin{array}{ccc}
1_{r} & 0 & v_{0}h_{3}\\
0 & 1_{r} & h_{2}^{-1}v_{1}h_{3}
\end{array}\right).
\]
So we can determine $h_{2},h_{3}\in\GL r$ by the condition $v_{0}h_{3}=1_{r},h_{2}^{-1}v_{1}h_{3}=-1_{r}$.
Hence, $z$ is taken to the normal form $\bx$ given in the table
of the lemma by the action of $\GL{2r}\times H_{\lm}$. 

Next we consider the case $\lm=(2,1)$. Write $z\in Z_{\lm}$ as $z=(z_{0}^{(1)},z_{1}^{(1)},z_{0}^{(2)})$
with $z_{i}^{(k)}\in\mat(2r,r)$. The condition for $z\in Z_{\lm}$
is $\det(z_{0}^{(1)},z_{1}^{(1)})\neq0$, $\det(z_{0}^{(1)},z_{0}^{(2)})\neq0$.
Put $g_{1}=(z_{0}^{(1)},z_{0}^{(2)})^{-1}\in\GL{2r}$ and consider
$g_{1}z$. Then 

\[
g_{1}z=\left(\begin{array}{ccc}
1_{r} & v_{0} & 0\\
0 & v_{1} & 1_{r}
\end{array}\right),\quad v_{0},v_{1}\in\mat(r).
\]
 The condition for $g_{1}z$ being contained in $Z_{\lm}$ is $v_{1}\in\GL r$.
Take $g_{2}\in\GL{2r}$ and $h\in H_{\lm}$ as 
\[
g_{2}=\left(\begin{array}{cc}
1_{r}\\
 & h_{2}^{-1}
\end{array}\right),\;h=\left(\begin{array}{ccc}
1_{r} & h_{1}\\
 & 1_{r}\\
 &  & h_{2}
\end{array}\right),\quad h_{2}\in\GL r.
\]
Then we have 
\[
g_{2}g_{1}zh=\left(\begin{array}{ccc}
1_{r} & v_{0}+h_{1}v_{1} & 0\\
0 & h_{2}^{-1}v_{1} & 1_{r}
\end{array}\right).
\]
So we determine $h$ by $v_{0}+h_{1}v_{1}=0$ and $h_{2}^{-1}v_{1}=1_{r}$,
which is possible since $v_{1}\in\GL r$. Thus, by the action of $\GL{2r}\times H_{(2,1)}$,
any $z\in Z_{(2,1)}$ can be taken to the normal form $\bx$ as given
in the lemma. The last case $\lm=(3)$ is similarly shown.
\end{proof}
For the normal form $\bx\in Z_{\lm}$ given in Lemma \ref{lem:Her-Ra-1},
we write down the integral for the corresponding Radon HGF. The space
of integration variable is $T=\gras(r,2r)\simeq\GL r\backslash\mat'(r,2r)$
with the homogeneous coordinates $t=(t',t'')\in\mat'(r,2r)$, $t',t''\in\mat(r)$.
Let $U=\{[t]\in\gras(r,2r)\mid\det t'\neq0\}$ be the affine neighbourhood
and let $u=(u_{i,j})\in\mat(r)$ be the affine coordinates defined
by $t=t'(1_{r},u)$, $u=(t')^{-1}t''$. Then we know that 
\[
F_{\lm}(z,\al;C)=\int_{C}\chi_{\lm}(\vec{u}z;\al)du,\quad\vec{u}=(1_{r},u),\;du=\wedge du_{i,j}.
\]
Then we have 

\begin{align}
F_{(1,1,1)}(\bx,\al;C) & =\int_{C}(\det u)^{\al_{2}}(\det(1-u))^{\al_{3}}du,\nonumber \\
F_{(2,1)}(\bx,\al;C) & =\int_{C}e^{\al_{2}\Tr(u)}(\det u)^{\al_{3}}du,\label{eq:her-ra-4}\\
F_{(3)}(\bx,\al;C) & =\int_{C}e^{\al_{2}\Tr(u)+\al_{3}\Tr(-\frac{1}{2}u^{2})}du.\nonumber 
\end{align}
For example we check the expression for $F_{(2,1)}(\bx,\al;C)$. Note
that the character is 
\begin{align*}
\chi_{(2,1)}(h;\al) & =(\det h_{0}^{(1)})^{\al_{1}}\exp\left(\al_{2}\Tr\left((h_{0}^{(1)})^{-1}h_{1}^{(1)}\right)\right)\cdot(\det h_{0}^{(2)})^{\al_{3}}
\end{align*}
and 
\[
\vec{u}\bx=(1_{r},u)\left(\begin{array}{ccc}
1_{r} & 0 & 0\\
0 & 1_{r} & 1_{r}
\end{array}\right)=(1_{r},u,u).
\]
Hence $\chi_{(2,1)}(\vec{u}\bx;\al)=(\det1_{r})^{\al_{1}}e^{\al_{2}\Tr(u)}(\det u)^{\al_{3}}=e^{\al_{2}\Tr(u)}(\det u)^{\al_{3}}$,
which is the integrand of $F_{(2,1)}(\bx,\al;C)$. Here we choose
the parameter $\al$ as $\al_{2}=-1$ for $F_{(2,1)}$. For $F_{(3)}$,
we choose $\al$ as $\al_{2}=0,\al_{3}=1$. Then the integrand has
the same form as the Hermitian matrix integral (\ref{eq:hermInt-3})
for $B_{r}(a,b),\G_{r}(a)$ and $G_{r}$. As for the space $\herm$
of integration variables for the Hermitian matrix integral, we remark
that $\herm$ is a real form of $\mat(r)$ in the sense that any $u\in\mat(r)$
can be written as $u=X+\sqrt{-1}Y$, $X,Y\in\herm$ uniquely. Moreover
we see from (\ref{eq:hermInt-2}) that the form $du$ coincides with
$dU$ modulo constant when $u$ is restricted to $\herm$. Hence in
the integrals (\ref{eq:her-ra-4}), we can choose $r^{2}$-chain in
the space $\herm$. So, in the case $|\lm|=3$, $F_{\lm}(\bx,\al;C)$
gives the beta function, the gamma function, and the Gaussian integral
defined in terms of the Hermitian matrix integral corresponding to
the partitions $(1,1,1),(2,1)$ and $(3)$, respectively.
\begin{rem}
\label{rem:Her-Ra}In the above, we assumed additional conditions
on $\al$ in the cases $\lm=(2,1)$ and $\lm=(3)$, which seems a
little bit artificial. As will be shown in the forthcoming paper,
we can carry out this kind of normalization of $\al$ as an effect
of the action of Weyl group analogue, which describes the symmetry
of the Radon HGF.
\end{rem}

\subsubsection{\label{subsec:herm-radon-2}Radon HGF for $(m,N)=(2r,4r)$}
\begin{lem}
\label{lem:Her-Ra-2}Let $\lm$ be a partition of $4$. For any $z\in Z_{\lm}$,
we can take a representative $\bx\in Z_{\lm}$ of the orbit $O(z)$
of the action of $\GL{2r}\times H_{\lm}$ as given in the following
table.

\bigskip

\begin{tabular}{|c|c|c|c|}
\hline 
$\lm$ & $\bx_{1}$ & $\bx_{2}$ & $\bx_{3}$\tabularnewline
\hline 
\hline 
$(1,1,1,1)$ & $\left(\begin{array}{cccc}
1_{r} & 0 & 1_{r} & 1_{r}\\
0 & 1_{r} & -1_{r} & -x
\end{array}\right)$ &  & \tabularnewline
\hline 
$(2,1,1)$ & $\left(\begin{array}{cccc}
1_{r} & 0 & 0 & 1_{r}\\
0 & x & 1_{r} & -1_{r}
\end{array}\right)$ & $\left(\begin{array}{cccc}
1_{r} & 0 & 0 & 1_{r}\\
0 & 1_{r} & 1_{r} & -x'
\end{array}\right)$ & $\ensuremath{\left(\begin{array}{cccc}
1_{r} & 0 & 0 & x''\\
0 & 1_{r} & 1_{r} & -1_{r}
\end{array}\right)}$\tabularnewline
\hline 
$(2,2)$ & $\left(\begin{array}{cccc}
1_{r} & 0 & 0 & 1_{r}\\
0 & x & 1_{r} & 0
\end{array}\right)$ & $\ensuremath{\left(\begin{array}{cccc}
1_{r} & 0 & 0 & x'\\
0 & 1_{r} & 1_{r} & 0
\end{array}\right)}$ & \tabularnewline
\hline 
$(3,1)$ & $\left(\begin{array}{cccc}
1_{r} & 0 & 0 & 0\\
0 & 1_{r} & x & 1_{r}
\end{array}\right)$ & $\left(\begin{array}{cccc}
1_{r} & x' & 0 & 0\\
0 & -1_{r} & 0 & 1_{r}
\end{array}\right)$ & $\left(\begin{array}{cccc}
1_{r} & 0 & 0 & x''\\
0 & 1_{r} & 0 & -1_{r}
\end{array}\right)$\tabularnewline
\hline 
$(4)$ & $\left(\begin{array}{cccc}
1_{r} & 0 & 0 & 0\\
0 & 1_{r} & 0 & x
\end{array}\right)$ &  & \tabularnewline
\hline 
\end{tabular}
\end{lem}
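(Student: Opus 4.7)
My plan is to extend the proof of Lemma~\ref{lem:Her-Ra-1} partition by partition. For each partition $\lm$ of $4$ and each $z\in Z_\lm$, I will construct explicit $g\in\GL{2r}$ and $h\in\tH_\lm$ that bring $z$ to one of the displayed normal forms. The construction proceeds in two stages, mirroring exactly the two-stage argument of Lemma~\ref{lem:Her-Ra-1} but with one extra $r$-column block to handle.

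First, in the coarse reduction stage, I choose two $r$-column blocks of $z$ whose concatenation is a $2r\times 2r$ matrix that is invertible by one of the conditions $\det z_\mu\neq 0$ defining $Z_\lm$. Left-multiplying $z$ by the inverse of this matrix (viewed as $g_1\in\GL{2r}$) brings this pair to the block identity $1_{2r}$ in those two column positions. For the partitions $(2,1,1)$, $(2,2)$, and $(3,1)$ there are several admissible choices of which pair to normalize and where subsequently to locate the residual parameter; these different choices give rise to the several variants $\bx_1,\bx_2,\bx_3$ appearing in the table.

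Second, in the fine reduction stage, each of the remaining two $r$-column blocks of $g_1z$, viewed as a $2r\times r$ matrix, splits into upper and lower $r\times r$ halves that satisfy invertibility constraints inherited from the remaining inequalities defining $Z_\lm$. I act by $h\in\tH_\lm$ on the right together with a block-diagonal element $g_2=\diag((h_0^{(1)})^{-1},(h_0^{(2)})^{-1})\in\GL{2r}$ on the left, where $h_0^{(k)}$ denotes the $\GL r$ component of the $k$-th Jordan factor of $h$; this combined action preserves the pair already normalized in the first stage. On the remaining two blocks it implements simultaneous left-right multiplication by $\GL r$ factors (absorbing the invertible $r\times r$ halves into $\pm 1_r$) and, when $\lm$ contains a Jordan factor of size $\geq 2$, nilpotent shears by the off-diagonal Jordan coefficients $h_j^{(k)}\in\mat(r)$ with $j\geq 1$, which cancel the blocks displayed as $0$ in the table. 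Exactly one $r\times r$ matrix parameter survives, and it is identified with $x$ (or $x'$ or $x''$).

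The main obstacle will be not conceptual but organizational: each of the five partitions must be handled separately and, for the cases with multiple variants, I must verify that each $\bx_i$ is realizable on a non-empty Zariski open subset of $Z_\lm$, so that the Stage~1 choice of columns is always available. Each individual verification is a short linear-algebra computation analogous to solving $v_0h_3=1_r,\; h_2^{-1}v_1h_3=-1_r$ in the proof of Lemma~\ref{lem:Her-Ra-1}, and I anticipate no new difficulty beyond careful case enumeration.
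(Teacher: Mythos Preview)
Your proposal is correct and follows essentially the same approach as the paper: the paper's proof also extends Lemma~\ref{lem:Her-Ra-1} by first normalizing a chosen pair of $r$-column blocks via $\GL{2r}$ and then absorbing the remaining freedom with $H_\lm$, leaving one $r\times r$ parameter. The paper is more terse---it only writes out the case $\lm=(1,1,1,1)$ explicitly (invoking Lemma~\ref{lem:Her-Ra-1} as a black box on the first three blocks and then acting by $h'=\diag(1_r,1_r,1_r,v_0^{-1})$ on the fourth) and leaves the remaining partitions to the reader---whereas you plan a full case enumeration; but the underlying two-stage reduction is identical.
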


\begin{proof}
We check the case $\lm=(1,1,1,1)$ only. By the proof of Lemma \ref{lem:Her-Ra-1},
there exist $g\in\GL{2r}$ and $h=\diag(h_{1},h_{2},h_{3},1_{r})\in H_{\lm}$
such that 
\[
gzh=\left(\begin{array}{cccc}
1_{r} & 0 & 1_{r} & v_{0}\\
0 & 1_{r} & -1_{r} & v_{1}
\end{array}\right),\quad v_{0},v_{1}\in\mat(r).
\]
The condition for $gzh$ being an element of $Z_{\lm}$ is $\det v_{0}\neq0,\det v_{1}\neq0,\det(v_{0}-v_{1})\neq0$.
Put $h'=\diag(1_{r},1_{r},1_{r},v_{0}^{-1})$, then
\[
gzhh'=\left(\begin{array}{cccc}
1_{r} & 0 & 1_{r} & 1_{r}\\
0 & 1_{r} & -1_{r} & v_{1}v_{0}^{-1}
\end{array}\right).
\]
Then, putting $x:=-v_{1}v_{0}^{-1}$, we obtain the normal form $\bx_{1}$
given in the table. 
\end{proof}
Corresponding to the normal form $\bx_{i}\in Z_{\lm}$ given above,
we have the following Radon HGF in terms of the integration variable
$u=(u_{i,j})_{1\leq i,j\leq r}$ and $du=\wedge_{i<j}du_{i,j}$:

(1) $\lm=(1,1,1,1)$:
\[
F_{(1,1,1,1)}(\bx_{1},\al;C)=\int_{C}(\det u)^{\al_{2}}(\det(1-u))^{\al_{3}}(\det(1-ux))^{\al_{4}}du.
\]

(2) $\lm=(2,1,1)$:
\begin{align*}
\text{\ensuremath{F_{(2,1,1)}}(\ensuremath{\bx_{1}},\ensuremath{\al};C) } & =\int_{C}e^{\al_{2}\Tr(ux)}(\det u)^{\al_{3}}(\det(1-u))^{\al_{4}}du,\\
\text{\ensuremath{F_{(2,1,1)}}(\ensuremath{\bx_{2}},\ensuremath{\al};C) } & =\int_{C}e^{\al_{2}\Tr(u)}(\det u)^{\al_{3}}(\det(1-ux'))^{\al_{4}}du,\\
\text{\ensuremath{F_{(2,1,1)}}(\ensuremath{\bx_{3}},\ensuremath{\al};C) } & =\int_{C}e^{\al_{2}\Tr(u)}(\det u)^{\al_{3}}(\det(x''-u))^{\al_{4}}du.
\end{align*}

(3) $\lm=(2,2)$:
\begin{align*}
\text{\ensuremath{F_{(2,2)}}(\ensuremath{\bx_{1}},\ensuremath{\al};C) } & =\int_{C}e^{\al_{2}\Tr(ux')}(\det u)^{\al_{3}}e^{\al_{4}\Tr(u^{-1})}du,\\
\text{\ensuremath{F_{(2,2)}}(\ensuremath{\bx_{2}},\ensuremath{\al};C)} & =\int_{C}e^{\al_{2}\Tr(u)}(\det u)^{\al_{3}}e^{\al_{4}\Tr(u^{-1}x')}du.
\end{align*}

(4) $\lm=(3,1)$:
\begin{align*}
\text{\ensuremath{F_{(3,1)}}(\ensuremath{\bx_{1}},\ensuremath{\al};C) } & =\int_{C}e^{\al_{2}\Tr(u)+\al_{3}\Tr(ux-\frac{1}{2}u^{2})}(\det u)^{\al_{4}}du,\\
\text{\ensuremath{F_{(3,1)}}(\ensuremath{\bx_{2}},\ensuremath{\al};C) } & =\int_{C}e^{\al_{2}\Tr(x'-u)+\al_{3}\Tr(-\frac{1}{2}(x'-u)^{2})}(\det u)^{\al_{4}}du,\\
\text{\ensuremath{F_{(3,1)}}(\ensuremath{\bx_{3}},\ensuremath{\al};C) } & =\int_{C}e^{\al_{2}\Tr(u)+\al_{3}\Tr(-\frac{1}{2}u^{2})}(\det(x''-u))^{\al_{4}}du.
\end{align*}

(5) $\lm=(4)$ case:

\[
F_{(4)}(\bx_{1},\al;C)=\int e^{\al_{1}\Tr(u)+\al_{2}\Tr(-\frac{1}{2}u^{2})+\al_{3}\Tr(ux-\frac{1}{3}u^{3})}du.
\]
In Lemma \ref{lem:Her-Ra-2}, there are partitions $\lm$ for which
we listed up multiple normal forms $\bx_{i}$. Corresponding to normal
forms, we obtain different expressions of the Radon HGF in appearance.
However they are related to each other through the covariance property
given in Proposition \ref{prop:covariance}.

To relate the Radon HGF $F_{\lm}(\bx_{1},\al;C)$ to the HGFs given
by Hermitian matrix integral (\ref{eq:hermInt-5}), which are analogues
of the Gauss HGF and its confluent family, we add the following condition
on the parameter $\al=(\al_{1},\al_{2},\al_{3},\al_{4})\in\C^{4}$:
\begin{align*}
(2,1,1): & \quad\al_{2}=1,\\
(2,2): & \quad\al_{2}=1,\al_{4}=-1,\\
(3,1): & \quad\al_{2}=0,\al_{3}=1,\\
(4): & \quad\al_{2}=\al_{3}=0,\al_{4}=1.
\end{align*}
Under the above choice of $\al$, the Radon HGF $F_{\lm}(\bx_{1},\al;C)$
of type $\lm$ gives the Hermitian matrix integral analogue of Gauss,
Kummer, Bessel, Hermite-Weber and Airy as the case $\lm=(1,1,1,1)$,
$(2,1,1),(2,2)$, $(3,1)$ and $(4)$, respectively. Remark \ref{rem:Her-Ra}
explains the reason of the above choice of parameter $\al$ in this
case too.

\subsubsection{\label{subsec:Herm-Radon-3}Radon HGF for $(m,N)=(2r,nr)$ with $\protect\lm=(1,\dots,1)$}
\begin{lem}
\label{lem:normal-nonconf}For any $z\in Z_{(1,\dots,1)}$, we can
take a representative $\bx\in Z_{(1,\dots,1)}$ of the orbit $O(z)$
as 
\[
\bx=\left(\begin{array}{cccccc}
1_{r} & 0 & 1_{r} & 1_{r} & \dots & 1_{r}\\
0 & 1_{r} & -1_{r} & -x_{4} & \dots & -x_{n}
\end{array}\right),\quad x_{j}\in\mat(r).
\]
The condition for $\bx$ to belong to $Z_{(1,\dots,1)}$ is 
\[
\det x_{i}\neq0,\;\det(1_{r}-x_{i})\neq0,\;\det(x_{i}-x_{j})\neq0\quad4\leq\forall i\ne j\leq n.
\]
\end{lem}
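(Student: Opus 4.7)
The plan is to mimic directly the argument used for $\lm=(1,1,1)$ in the proof of Lemma~\ref{lem:Her-Ra-1} and then check the resulting non-degeneracy conditions on the $x_j$'s. Write $z=(z_1,\dots,z_n)$ with $z_j\in\mat(2r,r)$. Since $z\in Z_{(1,\dots,1)}$ forces $\det(z_1,z_2)\neq0$, apply $g_1:=(z_1,z_2)^{-1}\in\GL{2r}$ from the left. Then
\[
g_1 z=\begin{pmatrix}1_r & 0 & v_{0,3} & \cdots & v_{0,n}\\ 0 & 1_r & v_{1,3} & \cdots & v_{1,n}\end{pmatrix},
\]
and invariance of $Z_{(1,\dots,1)}$ under the $\GL{2r}$-action implies $v_{0,j},v_{1,j}\in\GL r$ for $j\geq 3$, via $\det(z_2,z_j)\neq 0$ and $\det(z_1,z_j)\neq 0$.

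Next, I would pick $h=\diag(1_r,h_2,h_3,\dots,h_n)\in H_\lm$ and $g_2=\diag(1_r,h_2^{-1})\in\GL{2r}$ to obtain
\[
g_2 g_1 z h=\begin{pmatrix}1_r & 0 & v_{0,3}h_3 & v_{0,4}h_4 & \cdots & v_{0,n}h_n\\ 0 & 1_r & h_2^{-1}v_{1,3}h_3 & h_2^{-1}v_{1,4}h_4 & \cdots & h_2^{-1}v_{1,n}h_n\end{pmatrix}.
\]
Setting $h_3:=v_{0,3}^{-1}$ forces the top of column $3$ to be $1_r$, and then requiring the bottom to be $-1_r$ determines $h_2:=-v_{1,3}v_{0,3}^{-1}\in\GL r$ (invertibility uses $v_{1,3}\in\GL r$). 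For $j\geq 4$, we take $h_j:=v_{0,j}^{-1}$ so that the top of column $j$ becomes $1_r$, and we simply \emph{read off} $x_j:=-h_2^{-1}v_{1,j}v_{0,j}^{-1}$ as the resulting lower block. This brings $z$ into the stated normal form $\bx$ by the action of $\GL{2r}\times H_\lm$.

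Finally, to verify the condition for $\bx\in Z_{(1,\dots,1)}$, one must compute $\det(\bx_i,\bx_j)$ for all pairs $i<j$. The only pairs that impose conditions on the $x_j$ are the pairs involving columns $\geq 4$: using the standard block-determinant formula
\[
\det\begin{pmatrix}1_r & 1_r\\ a & b\end{pmatrix}=\det(b-a)\quad(\text{whenever the top-left block is invertible}),
\]
one finds $\det(\bx_1,\bx_j)=\pm\det(x_j)$, $\det(\bx_3,\bx_j)=\pm\det(1_r-x_j)$, $\det(\bx_i,\bx_j)=\pm\det(x_i-x_j)$ for $4\leq i<j\leq n$, while the pairs $(1,2),(1,3),(2,3),(2,j)$ yield nonzero constants. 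This gives exactly the stated list of non-degeneracy conditions.

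There is no real obstacle here; the argument is a direct iteration of the $n=3$ case combined with a routine determinantal bookkeeping. The only small point to be careful about is tracking why $h_2$ is forced (and thus uniquely determined) by the column-$3$ normalization, so that the remaining normalizations for $j\geq 4$ leave no further freedom and the $x_j$ appear as genuine moduli.
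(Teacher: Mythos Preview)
Your argument is correct and follows exactly the route the paper intends: it says only that the proof is similar to that of Lemma~\ref{lem:Her-Ra-2}, which in turn bootstraps from the $\lm=(1,1,1)$ case of Lemma~\ref{lem:Her-Ra-1}, and your write-up carries out precisely this iteration (normalize the first three block-columns using $g_1,g_2,h_2,h_3$, then use the remaining $h_j$ to force the top blocks to $1_r$ and read off the $x_j$). The determinantal check of the $Z_{(1,\dots,1)}$-conditions is likewise the expected one.
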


The proof of this lemma is similar to that of Lemma \ref{lem:Her-Ra-2}.
Under Assumption \ref{assu:radon-nonconf}, we have 
\begin{equation}
F(\bx,\al;C)=\int_{C(\bx)}(\det u)^{\al_{2}}(\det(1_{r}-u))^{\al_{3}}\prod_{j=4}^{n}(\det(1_{r}-ux_{j})^{\al_{j}}du,\label{eq:exa-nonconf-1}
\end{equation}
 which gives a Hermitian matrix integral analogue of Lauricella's
$F_{D}$ of $n-3$ variables.


\begin{thebibliography}{10}
\bibitem{Aomoto}K. Aomoto, Les \'equation aux diff\'erences lin\'eaires
et les int\'egrales des fonctions multiformes, J. Fac. Sci. Univ.
Tokyo, Sec. IA 22 (1975), 271--297.

\bibitem{Appell-2} P. Appell and J. Kamp\'e de F\'eriet, Fonction
hyperg\'eom\'etrique et hypersph\'eriques, Gauthier Villars, Paris,
1926.

\bibitem{Arnold} V.I. Arnold, S.M. Gusein-Zade and A.N. Varchenko,
Singularities of Differentiable Maps, vol 1, Birkh\"auser, 1985.

\bibitem{Brion} M. Brion, Lectures on the geometry of flag varieties,
Trends Math. Birkhäuser Verlag, Basel, (2005), 33--85.

\bibitem{Erdelyi} A. Erdelyi et al., Higher transcendental functions,
vol I,II, R. E. Krieger Pub. Co., 1981.

\bibitem{Faraut} J. Faraut and A. Koranyi. Analysis on symmetric
cones, Oxford Math. monographs, 1994.

\bibitem{Gelfand}I. M. Gelfand, General theory of hypergeometric
functions. Dokl. Akad. Nauk. SSSR 288 (1986), 14--18, English translation
Soviet Math. Dokl. 33 (1986), 9--13.

\bibitem{Griffith}P. Griffiths and J. Harris, Principles of Algebraic
Geometry, John Wiley \& Sons, 1978.

\bibitem{inamasu-ki} K. Inamasu and H. Kimura, Matrix hypergeometric
functions, semi-classical orthogonal polynomials and quantum Painlev\'e
equations. Integral Transforms Spec. Funct. 32 (2021), no. 5-8, 528--544. 

\bibitem{IKSY}K. Iwasaki, H. Kimura, S. Shimomura and M. Yoshida,
From Gauss to Painlev\'e, Vieweg Verlag, 1991.

\bibitem{kimura-1} H. Kimura, On the holonomic systems for the Gauss
hypergeometric function and its confluent family of a matrix argument,
Istanbul J. Math. 2 (2024), 1-27.

\bibitem{Kimura-Haraoka}H. Kimura, Y. Haraoka and K. Takano, The
generalized confluent hypergeometric functions, Proc. Japan Acad.
68 (1992), 290--295.

\bibitem{Kimura-H-T}H. Kimura, Y. Haraoka, K. Takano, On contiguity
relations of the confluent hypergeometric systems, Proc. Japan Acad.
Ser. A Math. Sci. 70 (1994), no. 2, 47--49.

\bibitem{Kimura-Koitabashi}H. Kimura, T. Koitabashi, Normalizer of
maximal abelian subgroup of $GL(n)$ and general hypergeometric functions,
Kumamoto J. Math. $\textbf{9}$ (1996), 13--43.

\bibitem{Kontsevich} M. Kontsevich, Intersection theory on the moduli
space of curves and the matrix Airy function, Comm. Math. Phys. 147,
no. 1 (1992), 1--23.

\bibitem{Mehta} M.L. Mehta, Random matrices. Second edition, Academic
Press, Boston, MA, 1991.

\bibitem{muirhead} R. J. Muirhead, Systems of partial differential
equations for hypergeometric functions of matrix arguments, Ann. Math.
Statistics 41 (1970), 991-1001.

\bibitem{muirhead-2} R. J. Muirhead, Aspects of Multivariate Statistical
Theory, John Wiley \& Sons, 1982.

\bibitem{Ohsima} T. Oshima, Generalized Capelli identities and boundary
value problem for $GL(n)$, Structure of Solutions of Differential
Equations, Katata/Kyoto 1995, 307-335, World Scientific, 1996.

\bibitem{Pham-1} F. Pham, Introduction \`a l'\'etude topologique
des singularites de Landau, Gauthier-Villars, 1967.

\bibitem{Pham-2} F. Pham, La descente des cols par les onglets de
Lefschetz, avec vues sur Gauss-Manin, Colloque ``Syst\`emes diff\'erentiels
et singularit\'es'', C.I.R.M. Marseille-Luminy, 1983.

\bibitem{Tanisaki}T. Tanisaki, Hypergeometric Systems and Radon Transforms
for Hermitian Symmetric Spaces, Advanced Studies in Pure Mathematics
26 (2000), 235-263.

\bibitem{Verdier} J. L. Verdier, Stratifications de Whitney et th\'eor\`eme
de Bertini-Sard, Inventiones math. 36 (1976), 295-312.

\end{thebibliography}
\end{document}